\newtheorem{theorem}{Theorem}
\newtheorem{lemma}{Lemma}
\theoremstyle{remark}
\newtheorem{remark}[theorem]{\bf Remark}
\newcommand*\pFq[5]{{}_{#1}F_{#2}{\left[\genfrac..{0pt}{}{#3}{#4};#5\right]}}
\def\and{\quad\mbox{and}\quad}
\begin{document}

\setcounter{page}{1}

\title[]{Supercongruences related to ${}_3F_2(1)$ \\
involving harmonic numbers   
}

\author{Roberto~Tauraso}

\address{Dipartimento di Matematica, 
Universit\`a di Roma ``Tor Vergata'', 
via della Ricerca Scientifica,
00133 Roma, Italy}
\email{tauraso@mat.uniroma2.it}

\subjclass[2010]{11A07,33C20,11S80,33B15,11B65.}

\keywords{Supercongruences, hypergeometric series, harmonic numbers,
$p$-adic Gamma function}

\date{\today}

\begin{abstract} We show various supercongruences for truncated series which involve central binomial coefficients and harmonic numbers. The corresponding infinite series are also evaluated.
\end{abstract}

\maketitle

\section{Introduction}
In 1997, Van Hamme \cite{VH97} established the $p$-adic analogs of several Ramanujan type series. For one of them, the series labeled (H.1),  
\begin{equation}\label{E20}
\sum_{k=0}^{\infty}\frac{\binom{2k}{k}^3}{64^k}=
\pFq{3}{2}{\frac{1}{2},\frac{1}{2},\frac{1}{2}}{1,1}{1}=\frac{\pi}{\Gamma^4(\frac{3}{4})},
\end{equation}
the modulo $p^2$ congruence (H.2) for the truncated version has been recently improved by Long and Ramakrishna in \cite[Theorem 3]{LR16},
\begin{equation}\label{E21}
\sum_{k=0}^{p-1}\frac{\binom{2k}{k}^3}{64^k}\equiv_{p^3} \left \{ \begin{array}{ll} - \Gamma^4_p\left(\frac{1}{4}\right) & \text{ if } p\equiv_4 1, \vspace{3mm}\\
 -\frac{p^2}{16}\Gamma_p^4\left(\frac 14\right)  
& \text{ if } p\equiv_4 3. \end{array} \right.
\end{equation}
where $p$ is any prime greater than $3$
(we use the notation $a\equiv_m b$ to mean $a\equiv b \pmod{m}$).

In this paper we will investigate the series and the corresponding partial sums where the terms have one of the following forms
$$\binom{2k}{k}^3\frac{H_k}{64^{k}},\quad
\binom{2k}{k}^3\frac{H_k^{(2)}}{64^{k}},\quad
\binom{2k}{k}^3\frac{O_k}{64^{k}},\quad
\binom{2k}{k}^3\frac{O_k^{(2)}}{64^{k}}.$$
Here $H_k^{(r)}$ denotes the $k$-th generalized harmonic number of order 
$r$ and  $O_k^{(r)}$ is the sum with odd denominators,
$$H_k^{(r)}=\sum_{j=1}^{k}\frac{1}{j^r}
\quad\mbox{and}\quad O_k^{(r)}=\sum_{j=1}^{k}\frac{1}{(2j-1)^r}$$
where we adopt the convention that $H_k=H_k^{(1)}$ and $O_k=O_k^{(1)}$.

The main results are presented in Section 3 (evaluations of the infinite series) and Section 5 (congruences for the truncated series). For example we show that 
$$\sum_{k=1}^{\infty}\binom{2k}{k}^3\frac{O_k}{64^k}=\frac{ \pi^2}{6\Gamma^4(\frac{3}{4})},\and
\sum_{k=1}^{p-1}\binom{2k}{k}^3\frac{O_k}{64^k}
\equiv_{p^2}
\begin{cases}
0
&\text{if $p\equiv_4 1$},\vspace{3mm}\\
-\frac{p}{12}\Gamma^4_p\left(\frac{1}{4}\right) 
&\text{if $p\equiv_4 3$}.
\end{cases}$$
The correspondence between the right-hand sides of the infinite series and the finite sum is particularly striking for the appearance of the classic Gamma function and the $p$-adic analog.

\section{Similar results of lower degree}

Before dealing with the main issue, we are going to take a look to similar sums already in the literature, where the central binomial coefficient is raised to a power less than $3$. Assume that $p$ is a prime greater than $3$.
For $n\geq 1$, we have
$$\sum_{k=1}^{n-1}\binom{2k}{k}\frac{H_k}{4^{k}}
=\binom{2n}{n}\frac{2n(H_{n-1}-2)}{4^{n}}+2.
$$
Thus, by $n=p$, we obtain (see \cite[(1.10)]{Szw15} for the modulo $p^3$ version)
$$\sum_{k=1}^{p-1}\binom{2k}{k}\frac{H_k}{4^{k}}
\equiv_{p^4}2-2p+4p^2q_p(2)-6p^3q^2_p(2)-\frac{1}{3}p^3B_{p-3},
$$
where $q_p(a)=\frac{a^{p-1}-1}{p}$ is the Fermat quotient and we used  the Wolstenholme's theorem $\binom{2p}{p}\equiv_{p^3} 2$, and the congruences
\begin{equation}\label{E12}
H_{p-1}\equiv_{p^3}-\frac{1}{3}p^2B_{p-3},\quad
4^{p-1}\equiv_{p^3}1+2pq_p(2)+p^2q^2_p(2)
\end{equation}
(for the first one we can refer to \cite[Theorem 5.1 (a)]{Szh00}).
Moreover, the identity
$$\sum_{k=1}^{n-1}\binom{2k}{k}\frac{H_k^{(2)}}{4^{k}}
=\binom{2n}{n}\frac{2nH_{n-1}^{(2)}}{4^{n}}-2
\sum_{k=1}^{n-1}\frac{\binom{2k}{k}}{k4^{k}}
$$
implies (see \cite[(1.11)]{Szw15} for the modulo $p$ version)
$$\sum_{k=1}^{p-1}\binom{2k}{k}\frac{H_k^{(2)}}{4^{k}}
\equiv_{p^3}-4q_p(2)+2pq^2_p(2)-\frac{4}{3}p^2q^3_p(2)
-\frac{1}{2}p^2B_{p-3},
$$
where we employed the congruence established in 
\cite[Theorem 1.1]{Ta10},
$$\sum_{k=1}^{n-1}\frac{\binom{2k}{k}}{k4^{k}}\equiv_{p^3},
-H_{\frac{p-1}{2}}$$
and
\begin{equation}\label{E13}
H_{p-1}^{(2)}\equiv_{p^2}\frac{2}{3}pB_{p-3},\quad
H_{\frac{p-1}{2}}\equiv_{p^3} -2q_p(2) +pq^2_p(2)-\frac{2}{3}p^2q^3_p(2)-\frac{7}{12}p^2B_{p-3}
\end{equation}
given in \cite[Theorem 5.1 (a)]{Szh00})
and  \cite[Theorem 5.2 (c)]{Szh00} respectively.

As regards the squared case, the identities \cite[(2.4) and (2.8)]{Pr08}
\begin{align*}
&\sum_{k=1}^{n}\binom{n}{k}\binom{n+k}{k}(-1)^kH_k=2(-1)^n H_n,\\
&\sum_{k=1}^{n}\binom{n}{k}\binom{n+k}{k}(-1)^kH_k^{(2)}=2(-1)^{n+1}\sum_{k=1}^n \frac{(-1)^k}{k^2},
\end{align*} 
and the congruence for $0\leq k\leq n=(p-1)/2$ (note that $p$ divides $\binom{2k}{k}$ for $n<k<p$)
\begin{equation}\label{E01}
\binom{n}{k}\binom{n+k}{k}(-1)^k
=\binom{2k}{k}\frac{\prod_{j=1}^k((2j-1)^2-p^2)}{4^k(2k)!}
\equiv_{p^2}
\frac{\binom{2k}{k}^2}{16^k}
\end{equation}
imply  \cite[Theorem 4.1]{Szw15} (see also \cite[Theorems 1.1 and 1.2]{Szw14} for a more general $p^2$-congruence) 
\begin{align*}
&\sum_{k=1}^{p-1}\binom{2k}{k}^2\frac{H_k}{16^k}\equiv_{p^2}(-1)^{\frac{p+1}{2}} (4q_p(2) -2pq^2_p(2)),\\
&\sum_{k=1}^{p-1}\binom{2k}{k}^2\frac{H_k^{(2)}}{16^k}\equiv_{p^2}-8E_{p-3}+4E_{2p-4},
\end{align*}
where we also used 
\begin{equation}\label{E04}
H_{\frac{p-1}{2}}^{(2)}\equiv_{p^2}\frac{7}{3}pB_{p-3},\quad
H_{\lfloor \frac{p}{4}\rfloor}^{(2)}\equiv_{p^2}(-1)^{\frac{p-1}{2}}(8E_{p-3}-4E_{2p-4})+\frac{14}{3}pB_{p-3}
\end{equation}
given in \cite[Corollary 5.2]{Szh00}, \cite[Corollary 3.8]{Szh08}
and 
\begin{equation}\label{E03}
\sum_{k=1}^{n} \frac{(-1)^k}{k^2}=
\frac{1}{2}H_{\lfloor \frac{p}{4}\rfloor}^{(2)}
-H_{\frac{p-1}{2}}^{(2)}\equiv_{p^2}(-1)^{\frac{p-1}{2}}(8E_{p-3}-4E_{2p-4}).
\end{equation}

\section{Evaluations of the infinite series}

The generalized hypergeometric function is defined as
\begin{align*}
\pFq{r}{s}{a_1,a_2,\cdots,a_r}{b_1,b_2,\cdots,b_s}{z}=\sum_{k=0}^{\infty}\frac{(a_1)_k (a_2)_k\cdots (a_r)_k}{(b_1)_k (b_2)_k\cdots (b_s)_k}\cdot \frac{z^k}{k!}
\end{align*}
where
$(x)_k=x(x+1)\cdots (x+k-1)$ for $k\ge 1$ and $(x)_0=1$ is the  Pochhammer symbol and $a_i$, $b_j$ and $z$ are complex numbers with none of the $b_j$ being negative integers or zero. 
We recall some well-known hypergeometric identities:\vspace{1mm}

\noindent i) Dixon's theorem \cite[p.13]{Ba35}
\begin{equation}\label{Dixon}
\pFq{3}{2}{a,b,c}{1+a-b,1+a-c}{1}=\frac{\Gamma(1+\frac{a}{2})\Gamma(1+a-b)\Gamma(1+a-c)\Gamma(1+\frac{a}{2}-b-c)}{
\Gamma(1+a)\Gamma(1+\frac{a}{2}-b)\Gamma(1+\frac{a}{2}-c)\Gamma(1+a-b-c)},
\end{equation}

\noindent ii) Whipple's theorem \cite[p.16]{Ba35} 
\begin{equation}\label{Whipple}
\pFq{3}{2}{a,1-a,c}{e,1+2c-e}{1}=\frac{\pi2^{1-2c}\Gamma(e)\Gamma(1+2c-e)}{
\Gamma(\frac{a+e}{2})\Gamma(\frac{1-a+e}{2})\Gamma(1+c-\frac{a+e}{2})\Gamma(1+c-\frac{1-a+e}{2})}.
\end{equation}

In the next theorem we evaluate four specific series.

\begin{theorem} We have that
\begin{align}
&\sum_{k=1}^{\infty}\binom{2k}{k}^3\frac{H_k}{64^k}=\frac{2\pi\left(\pi-3\ln 2\right)}{3\Gamma^4(\frac{3}{4})},&
&\sum_{k=1}^{\infty}\binom{2k}{k}^3\frac{O_k}{64^k}=\frac{ \pi^2}{6\Gamma^4(\frac{3}{4})},
\label{S321}\\
&\sum_{k=1}^{\infty}\binom{2k}{k}^3\frac{H_k^{(2)}}{64^k}
=\frac{ \pi(12G-\pi^2)}{3\Gamma^4(\frac{3}{4})},&
&\sum_{k=1}^{\infty}\binom{2k}{k}^3\frac{O_k^{(2)}}{64^k}
=\frac{ \pi(\pi^2-8G)}{8\Gamma^4(\frac{3}{4})}.
\label{S322}
\end{align}
where $G=\sum_{k=0}^{\infty}\frac{(-1)^k}{(2k+1)^2}$ is the Catalan's constant. 
\end{theorem}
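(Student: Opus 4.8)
The plan is to obtain all four identities by \emph{parametric differentiation} of the two classical evaluations \eqref{Dixon} and \eqref{Whipple}, both of which reduce to the base series \eqref{E20}: Dixon's theorem at $a=b=c=\tfrac12$, and Whipple's at $a=c=\tfrac12$, $e=1$. The mechanism is that $\frac{d}{dx}\log(x)_k=\sum_{j=0}^{k-1}\frac{1}{x+j}$, so differentiating a Pochhammer symbol based at an integer produces an ordinary harmonic number while one based at a half--integer produces an odd one: $\frac{d}{d\epsilon}\log(1+\epsilon)_k\big|_0=H_k$ and $\frac{d}{d\epsilon}\log(\tfrac12+\epsilon)_k\big|_0=2O_k$, and the corresponding second derivatives give $-H_k^{(2)}$ and $-4O_k^{(2)}$. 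Writing $\binom{2k}{k}^3/64^k=(1/2)_k^3/(1)_k^3$, each target sum becomes a first or second $\epsilon$--derivative at $0$ of a suitably perturbed ${}_3F_2(1)$, whose value is the matching derivative of a quotient of Gamma functions, computable through the digamma $\psi$ and trigamma $\psi'$ functions.

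For the two first--order sums in \eqref{S321} I would perturb Dixon's series $\pFq{3}{2}{a,b,c}{1+a-b,1+a-c}{1}$ in two independent directions about $a=b=c=\tfrac12$. Taking $a=\tfrac12+\epsilon,\ b=c=\tfrac12$ makes the logarithmic $\epsilon$--derivative of the general term equal $2O_k-2H_k$, while $a=c=\tfrac12,\ b=\tfrac12-\epsilon$ gives $-2O_k-H_k$. Differentiating both sides produces, with $S_H:=\sum_{k\ge1}\binom{2k}{k}^3 H_k/64^k$ and $S_O:=\sum_{k\ge1}\binom{2k}{k}^3 O_k/64^k$, the linear system
\[
2S_O-2S_H=\frac{\pi(4\log 2-\pi)}{\Gamma^4(\tfrac34)},\qquad
-2S_O-S_H=\frac{\pi(2\log 2-\pi)}{\Gamma^4(\tfrac34)}.
\]
The two right--hand constants come from the logarithmic derivative of the Gamma quotient in \eqref{Dixon}, which collapses to a combination of $\psi(\tfrac14),\psi(\tfrac12),\psi(\tfrac34),\psi(1)$; using Gauss's values (so that $\psi(\tfrac34)-\psi(\tfrac14)=\pi$ and $\psi(\tfrac12)=-\gamma-2\log2$) the Euler constant $\gamma$ cancels, leaving the factors $4\log2-\pi$ and $2\log2-\pi$. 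Solving the $2\times2$ system yields the two closed forms.

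The second--order sums in \eqref{S322} are the delicate part, since a naive second derivative of a one--parameter family, via $T_k''/T_k=(\log T_k)''+\big((\log T_k)'\big)^2$, produces not only $\sum\binom{2k}{k}^3 H_k^{(2)}/64^k$ and $\sum\binom{2k}{k}^3 O_k^{(2)}/64^k$ but also the intractable quadratic sums $\sum\binom{2k}{k}^3 H_k^2/64^k$ and the like. The key idea is to use Whipple's series \eqref{Whipple}, whose parameters come in the linked pairs $a,1-a$ and $e,1+2c-e$, and to perturb \emph{along} these pairs so that the first derivative cancels termwise. Concretely, $a=c=\tfrac12,\ e=1+\epsilon$ gives $\pFq{3}{2}{1/2,1/2,1/2}{1+\epsilon,1-\epsilon}{1}$, whose logarithmic $\epsilon$--derivative is $-H_k+H_k=0$; hence the second derivative is purely $\sum\binom{2k}{k}^3(2H_k^{(2)})/64^k$, with no quadratic contamination, and equating it to $\big(2\psi'(1)-\psi'(\tfrac34)\big)$ times the base value gives $S_{H^{(2)}}$. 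Likewise $c=\tfrac12,\ e=1,\ a=\tfrac12+\epsilon$ gives $\pFq{3}{2}{1/2+\epsilon,1/2-\epsilon,1/2}{1,1}{1}$, again with vanishing first derivative and second derivative $-8O_k^{(2)}$, yielding $S_{O^{(2)}}$ from $-\psi'(\tfrac34)$. The Catalan constant enters through the trigamma reflection $\psi'(\tfrac14)+\psi'(\tfrac34)=2\pi^2$ together with $\psi'(\tfrac14)-\psi'(\tfrac34)=16G$, i.e.\ $\psi'(\tfrac34)=\pi^2-8G$, alongside $\psi'(1)=\pi^2/6$.

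The main obstacle is twofold. Analytically, one must justify differentiating \eqref{Dixon} and \eqref{Whipple} termwise in the parameters; this is legitimate because at the base point the convergence quantity $(e+f)-(a+b+c)=2-\tfrac32=\tfrac12$ is positive, so the series converge absolutely and locally uniformly in a parameter neighborhood (the terms are $O(k^{-3/2}\log k)$) and the hypergeometric functions are analytic there, whence the $\epsilon$--derivatives pass under the summation sign and the classical identities may be differentiated freely. Conceptually, the real crux is the order--two case: only by perturbing Whipple's series along its linked parameter pairs does the first--order term vanish and the quadratic sums drop out, which is exactly what makes the clean evaluation possible. The remainder is the routine, if careful, reduction of the digamma and trigamma combinations at the quarter--integers.
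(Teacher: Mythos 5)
Your proposal is correct and follows essentially the same route as the paper: first-order parametric differentiation of Dixon's theorem in two independent directions about $a=b=c=\tfrac12$ to get a solvable linear system for $S_H$ and $S_O$, and second-order differentiation of Whipple's theorem along the linked parameter pairs (so the first derivative vanishes termwise and no quadratic harmonic sums appear) to isolate $H_k^{(2)}$ and $O_k^{(2)}$, finishing with the standard digamma and trigamma values at the quarter-integers. Your right-hand-side constants agree with the paper's (e.g.\ $2S_O+S_H$ and $2S_O-2S_H$ match, and your $-\Psi_1(\tfrac34)$ equals the paper's $\tfrac12\Psi_1(\tfrac14)-\tfrac12\Psi_1(\tfrac34)-\pi^2$), and you additionally supply the convergence justification for termwise differentiation that the paper leaves implicit.
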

\begin{proof} Let
$$H_k^{(r)}(x)=\sum_{j=0}^{k-1}\frac{1}{(x+j)^r}.$$
Then
$$\frac{d}{dx}\left((x)_k\right)=(x)_k\cdot H_k(x)\quad\mbox{and}\quad
\frac{d}{dx}\left(H_k^{(r)}(x)\right)=-rH_k^{(r+1)}(x).$$
For \eqref{S321}, let $a=b=1/2$ in \eqref{Dixon}, then
$$
\left.\frac{\partial}{\partial c}\left(\pFq{3}{2}{\frac{1}{2},\frac{1}{2},c}{1,\frac{3}{2}-c}{1}\right)\right|_{c=\frac{1}{2}}
=\sum_{k=1}^{\infty}\binom{2k}{k}^3\frac{2O_k+H_k}{64^k}.$$
By setting $b=c=1/2$ in \eqref{Dixon}, we get 
$$
\left.\frac{\partial}{\partial a}\left(
\pFq{3}{2}{a,\frac{1}{2},\frac{1}{2}}{\frac{1}{2}+a,\frac{1}{2}+a}{1}\right)\right|_{a=\frac{1}{2}}
=\sum_{k=1}^{\infty}\binom{2k}{k}^3\frac{2O_k-2H_k}{64^k}.$$
On the other hand, by differentiating the right-hand side of 
\eqref{Dixon} and  \eqref{Whipple} and by using
$$\frac{d}{dx}\left(\Gamma(x)\right)=\Gamma(x)\cdot \Psi(x)\quad\mbox{and}\quad
\frac{d}{dx}\left(\Psi^{(r)}(x)\right)=\Psi^{(r+1)}(x)$$
where $\Psi^{(r)}$ is the polygamma function of order $r$ (with $\Psi^{(0)}=\Psi$), we obtain
$$\left.\frac{\partial}{\partial c}\left(\pFq{3}{2}{\frac{1}{2},\frac{1}{2},c}{1,\frac{3}{2}-c}{1}\right)\right|_{c=\frac{1}{2}}
\!\!\!\!\!\!\!\!
=\pFq{3}{2}{\frac{1}{2},\frac{1}{2},\frac{1}{2}}{1,1}{1}
\cdot\left(-\Psi(1)-\Psi\left(\frac{1}{4}\right)
+\Psi\left(\frac{3}{4}\right)+\Psi\left(\frac{1}{2}\right)\right),
$$
and
$$\left.\frac{\partial}{\partial a}\left(
\pFq{3}{2}{a,\frac{1}{2},\frac{1}{2}}{\frac{1}{2}+a,\frac{1}{2}+a}{1}\right)\right|_{a=\frac{1}{2}}
\!\!\!\!\!\!\!\!
=\pFq{3}{2}{\frac{1}{2},\frac{1}{2},\frac{1}{2}}{1,1}{1}
\cdot\left(
2\Psi(1)-2\Psi\left(\frac{3}{4}\right)-2\ln(2)\right).
$$
By considering a suitable linear combination  of the previous two identities, the special values
$$\Psi\left(\frac{1}{2}\right)-\Psi(1)=-\ln 4,\;
\Psi\left(\frac{1}{4}\right)-\Psi(1)=-\ln 8-\frac{\pi}{2}
,\;
\Psi\left(\frac{3}{4}\right)-\Psi(1)=-\ln 8+\frac{\pi}{2}.
$$
yield immediately \eqref{S321}.

\noindent Let $a=c=1/2$ in \eqref{Whipple},  then
$$
\left.\frac{\partial^2}{\partial e^2}
\left(\pFq{3}{2}{\frac{1}{2},\frac{1}{2},\frac{1}{2}}{e,2-e}{1}\right)\right|_{e=1}
=\sum_{k=1}^{\infty}\binom{2k}{k}^3\frac{2H^{(2)}_k}{64^k}.$$
Moreover, for $c=1/2$, $e=1$ in \eqref{Whipple}, we find
$$
\left.\frac{\partial^2}{\partial a^2}
\left(\pFq{3}{2}{a,1-a,\frac{1}{2}}{1,1}{1}\right)\right|_{a=\frac{1}{2}}
=\sum_{k=1}^{\infty}\binom{2k}{k}^3\frac{-8O^{(2)}_k}{64^k}.$$
On the right-hand side, we have
$$\left.\frac{\partial^2}{\partial e^2}
\left(\pFq{3}{2}{\frac{1}{2},\frac{1}{2},\frac{1}{2}}{e,2-e}{1}\right)\right|_{e=1}
\!\!\!\!\!\!\!\!
=\pFq{3}{2}{\frac{1}{2},\frac{1}{2},\frac{1}{2}}{1,1}{1}
\cdot\left(\frac{\pi^2}{3}-\Psi_1\left(\frac{3}{4}\right)\right),$$
and
$$\left.\frac{\partial^2}{\partial a^2}
\left(\pFq{3}{2}{a,1-a,\frac{1}{2}}{1,1}{1}\right)\right|_{a=\frac{1}{2}}
\!\!\!\!\!\!\!\!
=\pFq{3}{2}{\frac{1}{2},\frac{1}{2},\frac{1}{2}}{1,1}{1}
\cdot\left(\frac{1}{2}\Psi_1\left(\frac{1}{4}\right)
-\frac{1}{2}\Psi_1\left(\frac{3}{4}\right)-\pi^2\right).$$
As before, by combining the results and by using the special values
$$\Psi_1\left(\frac{1}{2}\pm\frac{1}{4}\right)=\pi^2\mp 8G$$
the  conclusion \eqref{S322} easily follows.
\end{proof}

\section{Congruences for the truncated series - Preliminary results}

If $n$ is an odd integer, by replacing $k$ with $(n-k)$ is easy to see that
\begin{equation}\label{CD4}
\sum_{k=0}^n(-1)^k\binom{n}{k}^3=0
\quad\mbox{and}\quad
\sum_{k=0}^n(-1)^k\binom{n}{k}^3H_kH_{n-k}=0.
\end{equation}
The next lemma follows from \cite[Theorem 1]{CC10}.
\begin{lemma}
For any non-negative odd integer $n=2m+1$, we have
\begin{align}\label{CD1}
&\sum_{k=0}^n(-1)^k\binom{n}{k}^3H_k=
-\frac{c_m}{6},\\
\label{CD2}
&\sum_{k=0}^n(-1)^k\binom{n}{k}^3(3H_k^2+H_{k}^{(2)})=
\frac{c_m}{2}
\left(H_m-4H_{2m+1}-H_{3m+2}+2H_{6m+4}\right).
\end{align}
where $\displaystyle c_m=\frac{(-1)^m(6m+3)!(m!)^3}{(3m+1)!((2m+1)!)^3}$.
\end{lemma}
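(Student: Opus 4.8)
The plan is to realize both sums as Taylor coefficients of a one-parameter deformation of Dixon's ${}_3F_2(1)$ and to read them off by differentiation, which is the mechanism behind \cite[Theorem 1]{CC10}. Setting $a=b=c=-n+\epsilon$ in \eqref{Dixon} makes the two lower parameters equal to $1$, so the left-hand side becomes the terminating sum $\sum_{k=0}^n \frac{((-n+\epsilon)_k)^3}{(k!)^3}$, which reduces to $\sum_{k=0}^n(-1)^k\binom{n}{k}^3$ at $\epsilon=0$. The right-hand side collapses to the explicit ratio
\[
\frac{\Gamma\!\left(1-\tfrac n2+\tfrac\epsilon2\right)\,\Gamma\!\left(1+\tfrac{3n}2-\tfrac{3\epsilon}2\right)}{\Gamma(1-n+\epsilon)\,\Gamma\!\left(1+\tfrac n2-\tfrac\epsilon2\right)^2\,\Gamma(1+n-\epsilon)}.
\]
Because $\Gamma(1-n+\epsilon)$ has a simple pole at $\epsilon=0$ for positive integers $n$, this expression vanishes to first order in $\epsilon$; its $\epsilon^0$ term is $0$, recovering the first identity in \eqref{CD4}.

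First I would extract the coefficient of $\epsilon^1$. Using $\tfrac{d}{d\epsilon}\log((-n+\epsilon)_k)\big|_0=H_{n-k}-H_n$ and the cube, the left-hand Taylor coefficient is $3\sum_{k}(-1)^k\binom{n}{k}^3(H_{n-k}-H_n)$; the reflection $k\mapsto n-k$ (which flips the sign for odd $n$) together with \eqref{CD4} turns this into $-3\sum_k(-1)^k\binom{n}{k}^3H_k$. On the right, the simple zero $1/\Gamma(1-n+\epsilon)\sim (n-1)!\,\epsilon$ and the half-integer values $\Gamma(\tfrac12-m)$, $\Gamma(3m+\tfrac52)$, $\Gamma(m+\tfrac32)$ simplify the linear term to exactly $c_m/2$, which yields \eqref{CD1}.

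The coefficient of $\epsilon^2$ yields \eqref{CD2}. Differentiating the cube twice produces the structure $\tfrac12(9p_k^2+3q_k)$ with $p_k=H_{n-k}-H_n$ and $q_k=H_{n-k}^{(2)}-H_n^{(2)}$; expanding the square, applying the $k\mapsto n-k$ symmetry, and using \eqref{CD4} to kill the $H_n^2$ and $H_n^{(2)}$ terms and \eqref{CD1} to evaluate the cross term $-18H_n\sum_k(-1)^k\binom{n}{k}^3 H_{n-k}$, the left-hand side becomes $-3\sum_k(-1)^k\binom{n}{k}^3(3H_k^2+H_k^{(2)})-3H_nc_m$. Matching this against the $\epsilon^2$ coefficient of the Gamma ratio and solving produces the right-hand side of \eqref{CD2}.

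The main obstacle is this last second-order step: one must expand the four-factor Gamma ratio to order $\epsilon^2$, which brings in $\Psi$ and $\Psi_1$ evaluated at the half-integer arguments $3m+\tfrac52$ and $m+\tfrac32$ as well as the integer arguments $2m+2$ and $2m$ coming from the pole. The duplication formula $\Psi(2z)=\tfrac12\Psi(z)+\tfrac12\Psi(z+\tfrac12)+\ln 2$ is precisely what converts these into integer harmonic numbers and is the source of the partners $H_{3m+2}$ and $H_{6m+4}=H_{2(3m+2)}$; carefully collecting all contributions (including the $-3H_nc_m$ correction, which supplies part of the $-4H_{2m+1}$ term) to reach the stated combination $\tfrac{c_m}{2}\left(H_m-4H_{2m+1}-H_{3m+2}+2H_{6m+4}\right)$ is the only genuinely delicate computation.
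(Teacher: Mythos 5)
Your proposal is correct and coincides with the paper's intended justification: the paper gives no proof of its own, simply citing Chen--Chu \cite{CC10}, whose Theorem~1 is obtained by precisely this differentiation of Dixon's formula at $a=b=c=-n+\epsilon$ (the terms with $k>n$ are $O(\epsilon^3)$ and so do not disturb the first two Taylor coefficients, which is worth stating explicitly). The computations check out --- the linear coefficient of the Gamma ratio is indeed $c_m/2$, and the cross term proportional to $H_n c_m$ is exactly what converts $-2H_{2m+1}$ into the stated $-4H_{2m+1}$ --- up to fixing one bookkeeping inconsistency in your sketch, where you alternate between the ``coefficient of $\epsilon^2$'' and ``second derivative'' normalizations and consequently carry a stray factor of $2$ (e.g.\ the cross term in $\tfrac12\cdot 9p_k^2$ is $-9H_nH_{n-k}$, not $-18H_nH_{n-k}$).
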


The next lemma establishes some identities involving the harmonic numbers that we will need later on.

\begin{lemma}
For any non-negative integer $n$, we have
\begin{align}\label{CD7}
\sum_{k=0}^n\binom{n}{k}\binom{n+k}{k}\binom{2k}{k}\frac{H_k^{(2)}}{(-4)^k}
=\begin{cases}
\displaystyle \binom{n}{\frac{n}{2}}^2\cdot\frac{\sum_{k=1}^n \frac{(-1)^k}{k^2}}{4^n}
&\text{if $n\equiv_2 0$},\vspace{3mm}\\
\displaystyle \binom{n-1}{\frac{n-1}{2}}^{-2}\cdot \frac{-4^{n-1}}{n^2} 
&\text{if $n\equiv_2 1$}.
\end{cases}
\end{align}
Moreover, for any even integer,
\begin{align}\label{CD5}
&\sum_{k=0}^n\binom{n}{k}\binom{n+k}{k}\binom{2k}{k}\frac{H_k}{(-4)^k}
=\binom{n}{\frac{n}{2}}^2\frac{H_n}{4^n},\\ \label{CD6}
&\sum_{k=0}^n\binom{n}{k}\binom{n+k}{k}\binom{2k}{k}\frac{H_{2k}}{(-4)^k}
=\binom{n}{\frac{n}{2}}^2\frac{H_n}{2\cdot 4^n}.
\end{align}
\end{lemma}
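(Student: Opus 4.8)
The plan is to read each summand as a terminating well-poised hypergeometric term and to differentiate a summation theorem in a parameter, exactly as in the proof of the Theorem above. Writing $\binom{n}{k}=(-1)^k(-n)_k/k!$, $\binom{n+k}{k}=(n+1)_k/k!$ and $\binom{2k}{k}/(-4)^k=(-1)^k(1/2)_k/k!$, the common term becomes $\binom{n}{k}\binom{n+k}{k}\binom{2k}{k}/(-4)^k=(-n)_k(n+1)_k(1/2)_k/(k!)^3$, so the plain sum is the terminating series $\pFq{3}{2}{-n,n+1,\frac12}{1,1}{1}$. This is the $a=-n$, $c=\frac12$, $e=1$ case of Whipple's theorem \eqref{Whipple}, whose value is $\binom{n}{n/2}^2/4^n$ for even $n$ and $0$ for odd $n$. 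Each of \eqref{CD7}, \eqref{CD5}, \eqref{CD6} is obtained by inserting a parameter into one slot of this term and differentiating; I will always keep the top entry $-n$ fixed, so that the factor $(-n)_k$ vanishes identically for $k>n$ and no tail terms are produced — this is the reason the manipulations stay finite and clean.

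For \eqref{CD7} I would deform the two lower parameters symmetrically: since $\frac{\partial^2}{\partial e^2}\frac{1}{(e)_k(2-e)_k}\big|_{e=1}=2H_k^{(2)}/(k!)^2$ (the first derivative vanishes by the $e\leftrightarrow2-e$ symmetry), one has $\sum_{k}\binom{n}{k}\binom{n+k}{k}\binom{2k}{k}\frac{H_k^{(2)}}{(-4)^k}=\tfrac12\frac{\partial^2}{\partial e^2}\pFq{3}{2}{-n,n+1,\frac12}{e,2-e}{1}\big|_{e=1}$. The right-hand side is evaluated by differentiating Whipple's Gamma product (with $c=\frac12$, so $2^{1-2c}=1$) twice in $e$ and setting $e=1$. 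The parity of $n$ enters precisely here: the factor $1/\Gamma\!\left(\frac{e-n}{2}\right)$ vanishes at $e=1$ exactly when $\frac{1-n}{2}$ is a non-positive integer, i.e. when $n$ is odd; this is why the base value is $0$ in that case and why the second derivative produces the reciprocal central binomial $\binom{n-1}{(n-1)/2}^{-2}(-4^{n-1})/n^2$. For even $n$ no such vanishing occurs and the two derivatives collapse to a finite combination of trigamma values equal to $\sum_{k=1}^n(-1)^k/k^2$, in the same spirit as \eqref{E03}. Carrying out this differentiation and collecting the special polygamma values is the step I expect to be the main obstacle, since it is where both cases and all the constants are pinned down.

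For \eqref{CD6} I would first note $H_{2k}=O_k+\tfrac12H_k$, so that \eqref{CD6} is equivalent, given \eqref{CD5}, to the vanishing $\sum_{k}\binom{n}{k}\binom{n+k}{k}\binom{2k}{k}\frac{O_k}{(-4)^k}=0$ for even $n$. Deforming instead the top entry $\frac12$, and using $\frac{\partial}{\partial s}(s)_k\big|_{s=1/2}=2\,O_k\,(1/2)_k$, this sum is $\tfrac12V'(1/2)$ with $V(s)=\pFq{3}{2}{-n,n+1,s}{1,1}{1}$. The key point is the reflection $V(s)=(-1)^nV(1-s)$: from the Beta integral $\frac{(s)_k}{k!}=\frac{\sin\pi s}{\pi}\int_0^1x^{s+k-1}(1-x)^{-s}\,dx$ and $\sum_k\binom{n}{k}\binom{n+k}{k}(-x)^k=P_n(1-2x)$ one gets the representation $V(s)=\frac{\sin\pi s}{\pi}\int_0^1x^{s-1}(1-x)^{-s}P_n(1-2x)\,dx$, and the substitution $x\mapsto1-x$ together with $P_n(2x-1)=(-1)^nP_n(1-2x)$ yields the reflection. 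For even $n$ this makes $V$ symmetric about $s=\tfrac12$, so $V'(\tfrac12)=0$ and \eqref{CD6} follows.

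Finally, for \eqref{CD5} I would use $\binom{2k}{k}/4^k=\frac{2}{\pi}\int_0^{\pi/2}\sin^{2k}\phi\,d\phi$ to write the sum as $\frac{2}{\pi}\int_0^{\pi/2}L_n(\sin^2\phi)\,d\phi$, where $L_n(y)=\sum_k\binom{n}{k}\binom{n+k}{k}(-y)^kH_k$. Writing $H_k=-\frac{\partial}{\partial\epsilon}\frac{k!}{(1+\epsilon)_k}\big|_{\epsilon=0}$ turns the inner sum into a derivative of a Jacobi polynomial: since $\pFq{2}{1}{-n,n+1}{1+\epsilon}{y}=\frac{n!}{(1+\epsilon)_n}P_n^{(\epsilon,-\epsilon)}(1-2y)$, one finds $L_n(y)=H_nP_n(1-2y)-\dot P_n(1-2y)$ with $\dot P_n(x)=\frac{\partial}{\partial\epsilon}P_n^{(\epsilon,-\epsilon)}(x)\big|_{\epsilon=0}$. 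The reflection $P_n^{(\alpha,\beta)}(-x)=(-1)^nP_n^{(\beta,\alpha)}(x)$ shows that $\dot P_n$ is an odd function when $n$ is even, so the substitution $\phi\mapsto\frac{\pi}{2}-\phi$ (which sends $\cos2\phi$ to $-\cos2\phi$) forces $\int_0^{\pi/2}\dot P_n(\cos2\phi)\,d\phi=0$. Since $\frac{2}{\pi}\int_0^{\pi/2}P_n(\cos2\phi)\,d\phi$ is exactly the base value $\binom{n}{n/2}^2/4^n$, this leaves $\sum_k\binom{n}{k}\binom{n+k}{k}\binom{2k}{k}\frac{H_k}{(-4)^k}=H_n\binom{n}{n/2}^2/4^n$, which is \eqref{CD5}. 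Throughout, the analytic steps (termwise differentiation at $a=-n$, convergence of the Beta integral for $0<s<1$ and its continuation as a polynomial identity in the parameter) are routine; the genuine work is the Gamma and polygamma computation for \eqref{CD7}.
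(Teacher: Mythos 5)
Your proof is correct, but it follows a genuinely different route from the paper. The paper proves the lemma by the Wilf--Zeilberger method: for the base term $F(m,k)=\binom{2m}{k}\binom{2m+k}{k}\binom{2k}{k}\binom{2m}{m}^{-2}(-4)^{2m-k}$ it exhibits a certificate $G(m,k)$ with $F(m+1,k)-F(m,k)=G(m,k+1)-G(m,k)$, and then Abel summation against $H_k^{(2)}$ converts the weighted difference into $-\sum_{k\ge1}G(m,k)/k^2$, which collapses to $-\tfrac{1}{(2m+1)^2}+\tfrac{1}{(2m+2)^2}$; this yields a first-order recurrence in $m$ whose solution is \eqref{CD7}, and the remaining identities are asserted to follow ``in a similar way.'' You instead recognize the unweighted sum as the terminating Whipple evaluation $\pFq{3}{2}{-n,n+1,\frac12}{1,1}{1}$ and obtain \eqref{CD7} by differentiating \eqref{Whipple} twice in $e$ (I checked the constants: in the odd case the double zero of $1/\bigl(\Gamma(\tfrac{e-n}{2})\Gamma(1-\tfrac{n+e}{2})\bigr)$ at $e=1$ produces exactly $-16^m(m!)^4/((2m+1)!)^2=\binom{n-1}{(n-1)/2}^{-2}(-4^{n-1})/n^2$, and in the even case $\tfrac12(\log R)''(1)=\tfrac14H_{n/2}^{(2)}-O_{n/2}^{(2)}=\sum_{k=1}^n(-1)^k/k^2$), while \eqref{CD5} and \eqref{CD6} follow from the Legendre/Jacobi representations and the parity of $\dot P_n$ and of $V(s)-(-1)^nV(1-s)$. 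Your approach buys uniformity (both parities of $n$ in \eqref{CD7} come from one formula) and a conceptual link to Section 3, where the same derivatives of Whipple at $a=c=\tfrac12$ evaluate the infinite series; its cost is the need to justify Whipple's identity in the terminating/degenerate regime and to carry out the Gamma--polygamma bookkeeping, which you correctly flag as the real work. The paper's WZ route is more mechanical and self-contained but hides the structural reason the answers look the way they do. Either argument is acceptable; if you write yours up, do spell out the odd-$n$ second-derivative computation for \eqref{CD7}, since that is the only place where a sign or normalization could realistically go wrong.
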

\begin{proof} For $n=2m$, let
$$F(m,k)=\binom{2m}{k}\binom{2m+k}{k}\binom{2k}{k}\binom{2m}{m}^{-2}(-4)^{2m-k}$$
then by Wilf-Zeilberger method we find
$$G(m,k)=-\frac{2(4m+3)k^2}{(2m+1)^3}
\binom{2m+1}{k-1}\binom{2m+k}{k}\binom{2k}{k}\binom{2m}{m}^{-2}(-4)^{2m-k}$$
such that  
$$F(m+1,k)-F(m,k)=G(m,k+1)-G(m,k).$$
Let $S(m)=\sum_{k\geq 1}F(m,k)H_k^{(2)}$ then, by summation by parts (see \cite{JD15} for a similar approach), we have
\begin{align*}
S(m+1)-S(m)&=\sum_{k\geq 0}(G(m,k+1)-G(m,k))H_k^{(2)}\\
&=-\sum_{k\geq 0}\frac{G(m,k+1)}{(k+1)^2}=-\sum_{k\geq 1}\frac{G(m,k)}{k^2}\\
&=-\frac{1}{(2m+1)^2} +\frac{1}{(2m+2)^2}.
\end{align*}
The other identities can be obtained in a similar way.
\end{proof}

The Morita's $p$-adic Gamma function $\Gamma_p$ is defined as the continuous extension to the set of all $p$-adic integers $\mathbb{Z}_p$ of the sequence
$$n\to (-1)^n\prod_{\substack{0\le k < n\\
(k,p)=1}}k$$
where $p$ is an odd prime and $n>1$ is an integer (see \cite[Chapter 7]{Ro00} for a detailed introduction to $\Gamma_p$).
If $x\in \mathbb{Z}_p$  then
$\Gamma_p(0)=1$ and
$$
\Gamma_p(x+1)=
\begin{cases}
-x\Gamma_p(x)\quad&\text{if $|x|_p=1$,}\\
-\Gamma_p(x)\quad &\text{if $|x|_p<1$, }
\end{cases}\
$$
where
$|\cdot|_p$ denotes the $p$-adic norm. By \cite[Theorem 14]{LR16}, for all $a,b\in \mathbb{Z}_p$,
\begin{equation}\label{gmod}
\Gamma_p(a+bp)\equiv_{p^2} \Gamma_p(a)(1+G_1(a) bp)
\end{equation}
where $G_1(a)=\Gamma'_p(a)/\Gamma_p(a)\in \mathbb{Z}_p$.
Moreover 
\begin{equation}\label{pref}
\Gamma_p(x)\Gamma_p(1-x)=(-1)^{s_p(x)}
\end{equation}
where $s_p(x)$ is the integer in $\{1,2,\dots,p\}$ such that $s_p(x)\equiv_p x$. The above formula is the $p$-adic analog of the classic reflection formula for the classic Gamma function
$$\Gamma(x)\Gamma(1-x)=\frac{\pi}{\sin(\pi x)}.$$

\begin{lemma}
For any prime $p>3$, 
\begin{align}\label{CD9}
\frac{\binom{2m}{m}^2}{16^m}
\equiv_{p^2}\begin{cases}
-\Gamma^4_p\left(\frac{1}{4}\right) 
&\text{if $p\equiv_4 1$},\vspace{3mm}\\
16\Gamma^{-4}_p\left(\frac{1}{4}\right) (1+2p)
&\text{if $p\equiv_4 3$},
\end{cases}
\end{align}
where $m=\lfloor p/4\rfloor$. Moreover if $p\equiv_4 3$ then
\begin{equation}\label{CD10}
c_m\equiv_{p^2}\frac{p}{2}\,\Gamma^4_p\left(\frac{1}{4}\right).
\end{equation}
\end{lemma}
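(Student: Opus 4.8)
The plan is to route everything through Morita's $p$-adic Gamma function and then apply \eqref{gmod}, \eqref{pref}, and the functional equation. Since $m=\lfloor p/4\rfloor$ satisfies $2m<p$, no integer in $\{1,\dots,2m\}$ is divisible by $p$, so the recursion $\Gamma_p(x+1)=-x\Gamma_p(x)$ (valid for $p$-adic units $x$) converts Pochhammer symbols and factorials into Gamma values with no stray powers of $p$. Writing $\binom{2m}{m}/4^m=(1/2)_m/m!$ and using $(x)_n=(-1)^n\Gamma_p(x+n)/\Gamma_p(x)$ together with $m!=(-1)^{m+1}\Gamma_p(m+1)$, I obtain the exact starting identity
$$\frac{\binom{2m}{m}^2}{16^m}=\frac{\Gamma_p(m+\tfrac12)^2}{\Gamma_p(\tfrac12)^2\,\Gamma_p(m+1)^2}.$$
This holds identically, not merely modulo $p^2$, which is what makes the subsequent expansion clean.

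For \eqref{CD9} I would split on $p\bmod 4$. When $p\equiv_4 1$, $m=(p-1)/4$, so $m+\tfrac12=\tfrac14+\tfrac p4$ and $m+1=\tfrac34+\tfrac p4$; when $p\equiv_4 3$, $m=(p-3)/4$, so $m+\tfrac12=-\tfrac14+\tfrac p4$ and $m+1=\tfrac14+\tfrac p4$. Applying \eqref{gmod} to first order replaces each $\Gamma_p(a+\tfrac p4)$ by $\Gamma_p(a)\bigl(1+G_1(a)\tfrac p4\bigr)$. The linear-in-$p$ contributions are governed by two facts about $G_1=\Gamma_p'/\Gamma_p$: differentiating the reflection formula \eqref{pref} gives $G_1(x)=G_1(1-x)$, hence $G_1(\tfrac14)=G_1(\tfrac34)$; and log-differentiating $\Gamma_p(x+1)=-x\Gamma_p(x)$ gives $G_1(x+1)=G_1(x)+1/x$, hence $G_1(-\tfrac14)=G_1(\tfrac34)+4$. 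Consequently the $p$-correction cancels entirely when $p\equiv_4 1$, while for $p\equiv_4 3$ it produces exactly $1+\bigl(G_1(-\tfrac14)-G_1(\tfrac14)\bigr)\tfrac p2=1+2p$.

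The surviving leading factors are then cleaned up by \eqref{pref} and the recursion. Reflection gives $\Gamma_p(\tfrac14)^2/\Gamma_p(\tfrac34)^2=\Gamma_p(\tfrac14)^4$ (the sign $(-1)^{s_p(1/4)}$ enters squared), and in the $p\equiv_4 3$ case the relation $\Gamma_p(-\tfrac14)=4\Gamma_p(\tfrac34)$ supplies the factor $16$. Finally $\Gamma_p(\tfrac12)^2=(-1)^{s_p(1/2)}$ with $s_p(\tfrac12)=(p+1)/2$ equals $-1$ for $p\equiv_4 1$ and $+1$ for $p\equiv_4 3$; dividing by this yields the two cases of \eqref{CD9} exactly.

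For \eqref{CD10} the key observation is that $p=4m+3$ forces $6m+3=p+2m$ with $2m<p$, so $(6m+3)!$ contains precisely the one multiple $p$ and $v_p(c_m)=1$. Using $n!=(-1)^{n+1}\Gamma_p(n+1)$ for $n<p$ and $n!=p\,(-1)^{n+1}\Gamma_p(n+1)$ for $p\le n<2p$, the factorials collapse to
$$c_m=(-1)^{m+1}\,p\,\frac{\Gamma_p(6m+4)\,\Gamma_p(m+1)^3}{\Gamma_p(3m+2)\,\Gamma_p(2m+2)^3}.$$
Since a factor $p$ is already extracted, I only need the ratio modulo $p$: the arguments reduce as $6m+4\equiv_p-\tfrac12$, $m+1\equiv_p\tfrac14$, $3m+2\equiv_p-\tfrac14$, $2m+2\equiv_p\tfrac12$, and the recursion gives $\Gamma_p(-\tfrac12)=2\Gamma_p(\tfrac12)$, $\Gamma_p(-\tfrac14)=4\Gamma_p(\tfrac34)$. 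Using $\Gamma_p(\tfrac12)^2=1$ and turning $\Gamma_p(\tfrac14)^3/\Gamma_p(\tfrac34)$ into $\Gamma_p(\tfrac14)^4/(-1)^{s_p(1/4)}$ via reflection, everything reduces to $\tfrac p2\Gamma_p(\tfrac14)^4$ up to the sign $(-1)^{m+1-s_p(1/4)}$, which is $+1$ because $s_p(\tfrac14)=(p+1)/4=m+1$. The main obstacle is bookkeeping rather than ideas: carrying the first-order expansion together with all the signs (from the $\Gamma_p$ recursion, from reflection via $s_p$, and the parity factors) consistently, and in particular securing the precise factor $1+2p$ in the $p\equiv_4 3$ case of \eqref{CD9}, which rests entirely on the two $G_1$ relations above.
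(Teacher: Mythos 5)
Your proof is correct and follows essentially the same route as the paper: convert the binomial and factorial expressions into $p$-adic Gamma values, expand via \eqref{gmod}, and clean up with \eqref{pref} and the recursion $\Gamma_p(x+1)=-x\Gamma_p(x)$. The only real difference is that where the paper invokes \cite[Lemma 2.4]{Li16} to express the linear correction as $(H_{3m+1}-H_m)\frac{p}{2}$ and then uses $H_{3m+1}\equiv_p H_m+4$, you obtain the same constant $4$ self-containedly from the functional-equation identities $G_1(x)=G_1(1-x)$ and $G_1(x+1)=G_1(x)+1/x$; you also write out the $p\equiv_4 1$ case of \eqref{CD9}, which the paper dismisses as ``handled similarly''.
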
 
\begin{proof} \noindent We start with \eqref{CD10}. Since $p\equiv_4 3$, we have that $m=(p-3)/4$ and
\begin{align*}
c_m&=\frac{(-1)^m(2m+p)!(m!)^3}{(3m+1)!((2m+1)!)^3}
\equiv_{p^2}\frac{(-1)^{m+1}p(m!)^3}{2(3m+2)!((2m+1)!)^2}\\
&=\frac{(-1)^{m+1}p\,
\Gamma_p^3\left(\frac{p+1}{4}\right)}{2\Gamma_p\left(\frac{3p+3}{4}\right)\Gamma_p^2\left(\frac{p+1}{2}\right)}
\equiv_{p^2}\frac{(-1)^{m+1}p\,
\Gamma_p^3\left(\frac{1}{4}\right)}{2\Gamma_p\left(\frac{3}{4}\right)\Gamma_p^2\left(\frac{1}{2}\right)}
\equiv_{p^2}\frac{p}{2}\,\Gamma^4_p\left(\frac{1}{4}\right)
\end{align*}
where, by \eqref{pref},
\begin{equation}\label{E05}
\Gamma_p^2\left(\frac{1}{2}\right)=(-1)^{\frac{p+1}{2}}=1,\quad
\mbox{and}\quad \Gamma_p\left(\frac{1}{4}\right)
\Gamma_p\left(\frac{3}{4}\right)=(-1)^{\frac{p+1}{4}}=(-1)^{m+1}.
\end{equation}
As regards  \eqref{CD9}, we consider only the case $p\equiv_4 3$ since the other case can be handled similarly. Then  
$$\frac{\binom{2m}{m}^2}{16^m}=
\left(\frac{\left(\frac{1}{2}\right)_m}{\left(1\right)_m}\right)^2
=\frac{\Gamma_p^2\left(1\right)}{\Gamma_p^2\left(\frac{1}{2}\right)}\cdot\frac{\Gamma_p^2\left(\frac{1}{2}+m\right)}{\Gamma_p^2\left(1+m\right)}
=
\frac{\Gamma_p^2\left(-\frac{1}{4}+\frac{p}{4}\right)}{
\Gamma_p^2\left(\frac{1}{4}+\frac{p}{4}\right)}.$$
By \eqref{gmod} and by \cite[Lemma 2.4]{Li16},
$$
\Gamma_p\left(-\frac{1}{4}+\frac{p}{4}\right)\equiv_{p^2}
\Gamma_p\left(-\frac{1}{4}\right)\left(1+(G_1(1)+H_{3m+1})\frac{p}{4}\right),
$$
and 
$$\Gamma_p\left(\frac{1}{4}+\frac{p}{4}\right)\equiv_{p^2}
\Gamma_p\left(\frac{1}{4}\right)\left(1+(G_1(1)+H_m)\frac{p}{4}\right).$$
Therefore, since $\Gamma_p\left(-\frac{1}{4}\right)=4\Gamma_p\left(-\frac{3}{4}\right)$,
\begin{align*}
\frac{\binom{2m}{m}^2}{16^m}&\equiv_{p^2}
\frac{\Gamma_p^2\left(-\frac{1}{4}\right)}{\Gamma_p^2\left(\frac{1}{4}\right)}\cdot\left(1+(H_{3m+1}-H_{m})\frac{p}{2}\right)
\equiv_{p^2} 16\Gamma^{-4}_p\left(\frac{1}{4}\right) (1+2p)
\end{align*}
where we also used  \eqref{E05} and
$$H_{3m+1}=H_{p-1}-\sum_{j=1}^{m+1}\frac{1}{p-j}\equiv_p H_m+\frac{4}{3p+1}\equiv_p H_m+4.$$
\end{proof}

\section{Congruences for the truncated series - Main results}
 
\begin{theorem} For any prime $p>3$,
\begin{align}\label{C321}
&\sum_{k=1}^{p-1}\binom{2k}{k}^3\frac{H_k}{64^k}
\equiv_{p^2}
\begin{cases}
\Gamma^4_p\left(\frac{1}{4}\right) \cdot (2q_p(2)-pq^2_p(2))
&\text{if $p\equiv_4 1$},\vspace{3mm}\\
-\frac{p}{12}\Gamma^4_p\left(\frac{1}{4}\right) 
&\text{if $p\equiv_4 3$},
\end{cases}
\end{align}
and 
\begin{align}\label{C322}
&\sum_{k=1}^{p-1}\binom{2k}{k}^3\frac{H_k^{(2)}}{64^k}
\equiv_{p^2}\begin{cases}
-\Gamma_p^4\left(\frac{1}{4}\right) \cdot (4E_{p-3}-2E_{2p-4})
&\text{if $p\equiv_4 1$},\vspace{3mm}\\
-\frac{1}{4}\Gamma_p^4\left(\frac{1}{4}\right)
&\text{if $p\equiv_4 3$}.
\end{cases}
\end{align}
\end{theorem}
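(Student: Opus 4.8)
The plan is to treat all four congruences through one reduction and then branch on whether $H_k$ or $H_k^{(2)}$ appears and whether $p\equiv_4 1$ or $p\equiv_4 3$. Set $n=(p-1)/2$. First I would truncate the sums: since $p\mid\binom{2k}{k}$ for $n<k<p$, the terms with $k>n$ are divisible by $p^3$, so modulo $p^2$ (indeed $p^3$) it suffices to sum over $0\le k\le n$. Next I would multiply \eqref{E01} by the $p$-adic unit $\binom{2k}{k}/4^k$ to obtain $\frac{\binom{2k}{k}^3}{64^k}\equiv_{p^2}\binom{n}{k}\binom{n+k}{k}\binom{2k}{k}\frac{1}{(-4)^k}$, which turns each truncated sum, modulo $p^2$, into exactly the combinatorial sums of Lemma 2. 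The governing dichotomy is that $n$ is even precisely when $p\equiv_4 1$ and odd precisely when $p\equiv_4 3$.

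For \eqref{C322} this works cleanly in both parities. When $p\equiv_4 1$ I would apply the first branch of \eqref{CD7}, observing that $\binom{n}{n/2}^2/4^{n}=\binom{2m}{m}^2/16^m$ with $m=(p-1)/4=\lfloor p/4\rfloor$ (here $4^n=2^{p-1}=16^m$), and then insert \eqref{CD9} together with the Euler-number congruence for $\sum_{k=1}^{n}(-1)^k/k^2$ recorded in Section~2. When $p\equiv_4 3$ I would apply the second branch of \eqref{CD7}, which yields $-n^{-2}\bigl(\binom{2m}{m}^2/16^m\bigr)^{-1}$ with $m=(p-3)/4$; substituting \eqref{CD9} and $n^2=(p-1)^2/4\equiv_{p^2}(1-2p)/4$ collapses everything to $-\tfrac14\Gamma_p^4(1/4)$ once the factors $(1\pm 2p)$ cancel. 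The case $p\equiv_4 1$ of \eqref{C321} is analogous via \eqref{CD5}: it produces $\frac{\binom{2m}{m}^2}{16^m}H_n\equiv_{p^2}-\Gamma_p^4(1/4)\,H_{(p-1)/2}$, and the expansion $H_{(p-1)/2}\equiv_{p^2}-2q_p(2)+pq_p^2(2)$ from \eqref{E13} gives the stated right-hand side.

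The genuine obstacle is \eqref{C321} for $p\equiv_4 3$, since Lemma 2 offers no closed form for $\sum\binom{n}{k}\binom{n+k}{k}\binom{2k}{k}H_k/(-4)^k$ with $n$ odd. Here I would instead use Lemma 1, which is built for odd $n=2m+1$. From $\binom{(p-1)/2}{k}\equiv_{p^2}(-1)^k\tfrac{\binom{2k}{k}}{4^k}\prod_{j=1}^{k}\bigl(1-\tfrac{p}{2j-1}\bigr)\equiv_{p^2}(-1)^k\tfrac{\binom{2k}{k}}{4^k}(1-pO_k)$, cubing gives the refined congruence $\frac{\binom{2k}{k}^3}{64^k}\equiv_{p^2}(-1)^k\binom{n}{k}^3(1+3pO_k)$. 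Thus $\sum_{k=0}^{n}\frac{\binom{2k}{k}^3}{64^k}H_k\equiv_{p^2}\sum_{k=0}^{n}(-1)^k\binom{n}{k}^3H_k+3p\sum_{k=0}^{n}(-1)^k\binom{n}{k}^3O_kH_k$, where the first sum equals $-c_m/6$ by \eqref{CD1} and then $-c_m/6\equiv_{p^2}-\tfrac{p}{12}\Gamma_p^4(1/4)$ by \eqref{CD10}.

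It remains to annihilate the correction, i.e. to show $\sum_{k=0}^{n}(-1)^k\binom{n}{k}^3O_kH_k\equiv_p 0$, and this bookkeeping is where the real effort lies. I would use the reflection $k\mapsto n-k$, which flips the sign of $(-1)^k\binom{n}{k}^3$ since $n$ is odd, so the sum equals minus the same sum with $O_kH_k$ replaced by $O_{n-k}H_{n-k}$. The modulo-$p$ reflection formulas $H_{n-k}\equiv 2O_k-2q_p(2)$, $O_{n-k}\equiv\tfrac12 H_k+q_p(2)$ and $H_{2(n-k)}\equiv H_{2k}$ (all consequences of $\tfrac{1}{n-i}\equiv\tfrac{-2}{2i+1}$ and $\tfrac{1}{p-i}\equiv-\tfrac1i$) then reduce the statement, through \eqref{CD4}, the relation $O_k=H_{2k}-\tfrac12 H_k$, and $c_m\equiv_p 0$, first to $\sum_{k=0}^{n}(-1)^k\binom{n}{k}^3O_k\equiv_p 0$ and finally to the single vanishing $\sum_{k=0}^{n}(-1)^k\binom{n}{k}^3H_{2k}\equiv_p 0$; the latter holds because $H_{2k}$ is symmetric modulo $p$ under $k\mapsto n-k$, so the sum equals its own negative. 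Once this reflection computation is in place, all four congruences of the theorem follow.
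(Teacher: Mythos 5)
Your proposal is correct and follows essentially the same route as the paper: truncation at $n=(p-1)/2$, then \eqref{E01} multiplied by the unit $\binom{2k}{k}/4^k$ combined with Lemma 2 (and \eqref{CD9}, \eqref{E03}, \eqref{E13}) for \eqref{C322} in both parities and for \eqref{C321} when $p\equiv_4 1$, and the expansion of $\binom{2k}{k}/4^k$ as $(-1)^k\binom{n}{k}$ times a mod-$p^2$ correction fed into Lemma 1 and \eqref{CD10} for \eqref{C321} when $p\equiv_4 3$. The only cosmetic difference is that you write the correction factor as $1+pO_k$ instead of the paper's $1-\frac{p}{2}\left(H_n-H_{n-k}\right)$, so you need an extra (correct) reflection computation to show $\sum_{k=0}^{n}(-1)^k\binom{n}{k}^3O_kH_k\equiv_p 0$, whereas the paper disposes of the corresponding term immediately via \eqref{CD4} and $c_m\equiv_p 0$.
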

\begin{proof} For \eqref{C322}, if $p\equiv_4 1$ then  $n=(p-1)/2$ is even and we use  \eqref{E01} and \eqref{CD7}. Finally we use \eqref{CD9}. 
If $p\equiv_4 3$ then $n=(p-1)/2=2m+1$ is odd.
We have
$$\frac{\binom{2k}{k}}{(-4)^k\binom{n}{k}}=
\prod_{j=0}^{k-1}\left(1-\frac{p}{2j+1}\right)^{-1}
\equiv_{p^2}1-\frac{p}{2}\sum_{j=0}^{k-1}\frac{1}{n-j}
=1-\frac{p}{2}\left(H_n-H_{n-k}\right)$$
and therefore
\begin{equation}\label{E02}
\frac{1}{4^k}\binom{2k}{k}\equiv_{p^2}
(-1)^k\binom{n}{k}\left(1-\frac{p}{2}\left(H_n-H_{n-k}\right)\right).
\end{equation}
Thus, by \eqref{CD4}, \eqref{CD5}, and \eqref{CD10}, 
\begin{align*}
\sum_{k=1}^{p-1}\binom{2k}{k}^3\frac{H_k}{64^k}
&\equiv_{p^2}
\sum_{k=0}^{n}
(-1)^k\binom{n}{k}^3\left(1-\frac{3p}{2}\left(H_n-H_{n-k}\right)\right)H_k\\
&\equiv_{p^2}
\left(1-\frac{3p}{2}H_n\right)\left(-\frac{c_m}{6}\right)
\equiv_{p^2}
-\frac{p}{12}\Gamma^4_p\left(\frac{1}{4}\right).
\end{align*}
As regards \eqref{C322}, we use \eqref{E01} and \eqref{CD7} with $n=(p-1)/2$. Then we apply \eqref{CD9} and \eqref{E03}.
\end{proof}

\begin{theorem} For any prime $p>3$,
\begin{align}\label{E09}
&\sum_{k=1}^{p-1}\binom{2k}{k}^3\frac{O_k}{64^k}
\equiv_{p^2}
\begin{cases}
0
&\text{if $p\equiv_4 1$},\vspace{3mm}\\
-\frac{p}{12}\Gamma^4_p\left(\frac{1}{4}\right) 
&\text{if $p\equiv_4 3$},
\end{cases}
\end{align}
and 
\begin{align}\label{E10}
&\sum_{k=1}^{p-1}\binom{2k}{k}^3\frac{O^{(2)}_k}{64^k}
\equiv_{p}
\begin{cases}
\frac{1}{2}\Gamma^4_p\left(\frac{1}{4}\right) E_{p-3}
&\text{if $p\equiv_4 1$},\vspace{3mm}\\
-\frac{1}{16}\Gamma^4_p\left(\frac{1}{4}\right) 
&\text{if $p\equiv_4 3$}.
\end{cases}
\end{align}
\end{theorem}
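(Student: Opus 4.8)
The plan is to reduce both sums to the range $1\le k\le n$ with $n=(p-1)/2$, to linearize the odd harmonic numbers through $O_k=H_{2k}-\tfrac12 H_k$ and $O_k^{(2)}=H_{2k}^{(2)}-\tfrac14 H_k^{(2)}$, and then to run the two strategies already used for \eqref{C321} and \eqref{C322}, split according to the parity of $n$. For the truncation, note that for $n<k\le p-1$ one has $p\mid\binom{2k}{k}$, so $\binom{2k}{k}^3$ has $p$-adic valuation at least $3$, while $O_k$ and $O_k^{(2)}$ have valuation at least $-1$ and $-2$ respectively (only the single term with denominator $p$, resp.\ $p^2$, is singular); hence the tail terms have valuation $\ge 2$ in the first sum and $\ge 1$ in the second, and may be discarded modulo $p^2$, resp.\ modulo $p$. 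Thus it suffices to evaluate $\sum_{k=1}^{n}$, where $O_k,O_k^{(2)}$ are $p$-adic integers.

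For $p\equiv_4 1$, so $n$ even, I would pass from $\binom{2k}{k}^3/64^k$ to the hypergeometric weight $\binom{n}{k}\binom{n+k}{k}\binom{2k}{k}/(-4)^k$, using that the two are congruent modulo $p^2$ (multiply \eqref{E01} by the $p$-adic integer $\binom{2k}{k}/4^k$). Each linearized piece is then a closed-form sum of Lemma 2: for $O_k$, identities \eqref{CD5} and \eqref{CD6} contribute $\binom{n}{n/2}^2H_n/(2\cdot 4^n)$ and $\tfrac12\binom{n}{n/2}^2H_n/4^n$, which cancel identically and yield the value $0$ with no further input. For $O_k^{(2)}$ the $H_k^{(2)}$-part is supplied by \eqref{CD7}, and I would finish with \eqref{CD9}, \eqref{E03} and \eqref{E04}; the ingredient not yet available here is the $H_{2k}^{(2)}$-weighted analogue of \eqref{CD7}.

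For $p\equiv_4 3$, so $n=2m+1$ odd, I would instead cube \eqref{E02} to obtain $\binom{2k}{k}^3/64^k\equiv_{p^2}(-1)^k\binom{n}{k}^3\bigl(1-\tfrac{3p}{2}(H_n-H_{n-k})\bigr)$, exactly as for \eqref{C321}. Substituting $O_k=H_{2k}-\tfrac12 H_k$, the $H_k$-part is controlled by \eqref{CD1} and by the vanishing of the cross term $\sum(-1)^k\binom{n}{k}^3H_kH_{n-k}$ in \eqref{CD4}, so after \eqref{CD10} it contributes $\tfrac{1}{12}c_m\equiv_{p^2}\tfrac{p}{24}\Gamma_p^4(\tfrac14)$. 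What remains is the alternating sum $\sum_{k=0}^n(-1)^k\binom{n}{k}^3H_{2k}$ together with its companion $\sum_{k=0}^n(-1)^k\binom{n}{k}^3H_{n-k}H_{2k}$ (needed only modulo $p$, since it is weighted by $p$). The substitution $k\mapsto n-k$ sends $H_{2k}$ to $H_{2n-2k}=H_{p-1-2k}$, and the expansion $H_{p-1-2k}\equiv H_{2k}+pH_{2k}^{(2)}\pmod{p^2}$ (with $H_{p-1}\equiv 0\pmod{p^2}$ by \eqref{E12}) together with the antisymmetry $(-1)^{n-k}=-(-1)^k$ yields $\sum(-1)^k\binom{n}{k}^3H_{2k}\equiv-\tfrac p2\sum(-1)^k\binom{n}{k}^3H_{2k}^{(2)}\pmod{p^2}$ and rewrites the companion as $-\sum(-1)^k\binom{n}{k}^3H_kH_{2k}$ modulo $p$. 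Once these order-two residues are known, \eqref{CD9}, \eqref{CD10} and \eqref{E04} produce the stated constants.

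I expect the main obstacle to be precisely the evaluations that Lemma 2 does \emph{not} already supply: the $H_{2k}$-weighted alternating binomial-cubed sum for odd $n$, the order-two sums $\sum(-1)^k\binom{n}{k}^3H_{2k}^{(2)}$ and $\sum(-1)^k\binom{n}{k}^3H_kH_{2k}$ modulo $p$, and the even-$n$ $H_{2k}^{(2)}$-weighted sum entering \eqref{E10}. These all sit one step beyond \eqref{CD1}, \eqref{CD2} and \eqref{CD6}. I would derive them by the same Wilf--Zeilberger and summation-by-parts mechanism as in the proof of Lemma 2, now with the shifted second difference $\tfrac1{(2k+1)^2}+\tfrac1{(2k+2)^2}$, and by combining \eqref{CD2} with the $k\mapsto n-k$ symmetry. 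Reducing the resulting harmonic expressions modulo $p$ through \eqref{E04} and the Euler numbers $E_{p-3},E_{2p-4}$, and finally substituting the $p$-adic Gamma values of Lemma 3, should then yield \eqref{E09} and \eqref{E10}.
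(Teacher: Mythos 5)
Your framework is sound and the even-$n$ case of \eqref{E09} is exactly the paper's argument (cancellation of \eqref{CD5} against \eqref{CD6} after \eqref{E01}), but the proposal does not close: the quantities you yourself flag as ``not yet available'' are precisely the heart of the matter, and the method you propose for obtaining them --- new Wilf--Zeilberger evaluations of $\sum(-1)^k\binom{n}{k}^3H_{2k}$, $\sum(-1)^k\binom{n}{k}^3H_{2k}^{(2)}$, $\sum(-1)^k\binom{n}{k}^3H_kH_{2k}$, and an $H_{2k}^{(2)}$-weighted analogue of \eqref{CD7} --- is neither carried out nor obviously workable. The alternating cube sums are not of the Gosper-summable shape exploited in Lemma 2, and Lemma 1 supplies only $\sum(-1)^k\binom{n}{k}^3H_k$ and the single combination $3H_k^2+H_k^{(2)}$, not the individual order-two sums your plan would require.

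The missing idea is elementary but decisive: for $n=(p-1)/2$ one has $2j+1\equiv -2(n-j)\pmod p$, whence the mod-$p$ reductions $H_{2k}\equiv\frac12\left(H_k+H_{n-k}-H_n\right)$ and $O_k^{(2)}\equiv-\frac14H_{n-k}^{(2)}$ for $0\le k\le n$ (equations \eqref{E06} and \eqref{E11} of the paper). Since every unknown sum in your plan occurs with a prefactor $p$ (or, for \eqref{E10}, is needed only modulo $p$), these reductions eliminate all occurrences of $H_{2k}$, $H_{2k}^{(2)}$ and $O_k^{(2)}$ in favour of $H_k$, $H_{n-k}$ and $H_{n-k}^{(2)}$; the $k\mapsto n-k$ symmetry and \eqref{CD4} then funnel the odd-$n$ computation into exactly the combination $3H_k^2+H_k^{(2)}$ evaluated by \eqref{CD2} (whose value is nonzero mod $p$ only because $H_{6m+4}$ contains the term $1/p$), and reduce \eqref{E10} in both parity cases directly to the already-proved congruence \eqref{C322} --- no $H_{2k}^{(2)}$-analogue of \eqref{CD7} is ever needed. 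Your setup (the tail estimate, the cubing of \eqref{E02}, the derivation of the analogue of \eqref{E07}, the $H_k$-part contribution $\frac{p}{24}\Gamma_p^4(\frac14)$) is all correct, but without these reductions, or an actual proof of the closed forms you postulate, the argument has a genuine gap.
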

\begin{proof}
If $n=(p-1)/2$ is even then by \eqref{E01}, \eqref{CD5} and  \eqref{CD6}, 
$$\sum_{k=1}^{p-1}\binom{2k}{k}^3\frac{O_k}{64^k}
=\sum_{k=1}^{p-1}\frac{\binom{2k}{k}^3}{64^k}\left(H_{2k}-\frac{H_k}{2}\right)
\equiv_{p^2}\binom{n}{\frac{n}{2}}^2\frac{H_n}{2\cdot 4^n}
-\frac{1}{2}\binom{n}{\frac{n}{2}}^2\frac{H_n}{4^n}\equiv_{p^2} 0.$$
Assume now that $n=(p-1)/2=2m+1$ is odd. We have that 
$$H_{2(n-k)}=H_{p-1}-\sum_{j=1}^{2k}\frac{1}{p-j}\equiv_{p^2}H_{2k}+pH^{(2)}_{2k}\equiv_{p^2}
H_{2k}+\frac{p}{4}\left(H_k^{(2)}-H_{n-k}^{(2)}\right).$$
Hence
\begin{align*}
\sum_{k=1}^{n}
(-1)^k\binom{n}{k}^3H_{2k}&=-\sum_{k=0}^{n}
(-1)^k\binom{n}{n-k}^3H_{2(n-k)}\\
&\equiv_{p^2}
-\sum_{k=1}^{n}
(-1)^k\binom{n}{k}^3H_{2k}-\frac{p}{4}\sum_{k=0}^{n}
(-1)^k\binom{n}{k}^3\left(H_k^{(2)}-H_{n-k}^{(2)}\right)
\end{align*}
which implies 
\begin{equation}\label{E07}
\sum_{k=1}^{n}
(-1)^k\binom{n}{k}^3H_{2k}
\equiv_{p^2}
-\frac{p}{8}\sum_{k=0}^{n}
(-1)^k\binom{n}{k}^3\left(H_k^{(2)}-H_{n-k}^{(2)}\right)
=
-\frac{p}{4}\sum_{k=0}^{n}
(-1)^k\binom{n}{k}^3H_k^{(2)}.
\end{equation}
Moreover
\begin{equation}\label{E06}
H_{2k}=\frac{H_k}{2}+\sum_{j=0}^{k-1}\frac{1}{2j+1}
\equiv_{p}
\frac{1}{2}\left(H_k-\sum_{j=0}^{k-1}\frac{1}{n-j}\right)
\equiv_{p}
\frac{1}{2}\left(H_k+H_{n-k}-H_n\right).
\end{equation}
Consequently by \eqref{E02}, \eqref{E07}, \eqref{E06}, 
\begin{align}\label{E25}
\sum_{k=1}^{p-1}\binom{2k}{k}^3\frac{H_{2k}}{64^k}
&\equiv_{p^2}
\sum_{k=1}^{n}
(-1)^k\binom{n}{k}^3\left(1-\frac{3p}{2}\left(H_n-H_{n-k}\right)\right)H_{2k}\nonumber\\
&\equiv_{p^2}
\sum_{k=1}^{n}
(-1)^k\binom{n}{k}^3H_{2k}-\frac{3p}{4}\sum_{k=0}^{n}
(-1)^k\binom{n}{k}^3\left(H_n-H_{n-k}\right)\left(H_k+H_{n-k}-H_n\right)\nonumber\\
&\equiv_{p^2}
-\frac{p}{4}\sum_{k=0}^{n}
(-1)^k\binom{n}{k}^3H_k^{(2)}-\frac{3p}{2}H_n\sum_{k=0}^{n}
(-1)^k\binom{n}{k}^3H_k\nonumber\\
&\quad+\frac{3p}{4}\sum_{k=0}^{n}
(-1)^k\binom{n}{k}^3H^2_{n-k}
-\frac{3p}{4}H_n\sum_{k=0}^{n}
(-1)^k\binom{n}{k}^3H_{n-k}\nonumber\\
&\equiv_{p^2}
-\frac{p}{4}\sum_{k=0}^{n}
(-1)^k\binom{n}{k}^3\left(3H^2_{k}+H_k^{(2)}\right)\equiv_{p^2}
-\frac{c_m}{4}\equiv_{p^2}-\frac{p}{8} \Gamma^4_p\left(\frac{1}{4}\right)
\end{align}
where in the last step we used \eqref{CD1}, \eqref{CD2},
and  \eqref{CD10} (note that $m<2m+1<3m+2<p<6m+4<2p$).
Finally, by \eqref{C321},
$$\sum_{k=1}^{p-1}\binom{2k}{k}^3\frac{O_k}{64^k}
=\sum_{k=1}^{p-1}\frac{\binom{2k}{k}^3}{64^k}\left(H_{2k}-\frac{H_k}{2}\right)
\equiv_{p^2}-\frac{p}{12}\Gamma^4_p\left(\frac{1}{4}\right)
$$
and the proof of \eqref{E09} is complete. 

As regards \eqref{E10}, we have by \eqref{E04}
\begin{equation}\label{E11}
O_k^{(2)}=\sum_{j=0}^{k-1}\frac{1}{(2j+1)^2}\equiv_p
\sum_{j=0}^{k-1}\frac{1}{4(n-j)^2}
=\frac{H_n^{(2)}-H_{n-k}^{(2)}}{4}\equiv_p-\frac{H_{n-k}^{(2)}}{4}
\end{equation}
where $n=(p-1)/2$.
Then, by \eqref{E02} and \eqref{E11},
\begin{align*}
\sum_{k=1}^{p-1}\binom{2k}{k}^3\frac{O_k^{(2)}}{64^k}
&\equiv_{p}-\frac{1}{4}\sum_{k=0}^{n}
(-1)^k\binom{n}{k}^3H_{n-k}^{(2)}
=\frac{(-1)^{n+1}}{4}\sum_{k=0}^{n}
(-1)^k\binom{n}{k}^3H_{k}^{(2)}\\
&\equiv_{p}\frac{(-1)^{n+1}}{4}\sum_{k=1}^{p-1}
\binom{2k}{k}^3\frac{H_{k}^{(2)}}{64^k}
\end{align*}
and the desired result follows from \eqref{C322}.
\end{proof}

\begin{remark}  By \eqref{C321}, \eqref{E09}, and \eqref{E21}
for any  prime $p>3$ 
$$\sum_{k=1}^{p-1}\binom{2k}{k}^3\frac{H_{2k}-H_{k}}{64^k}\equiv_p
q_p(2)\sum_{k=0}^{p-1}\frac{\binom{2k}{k}^3}{64^k}$$
which is a particular case of \cite[Corollary 2]{KLMSY16}.
Moreover by \eqref{C321} and \eqref{E25},
if $p\equiv_4 3$ then
$$\sum_{k=1}^{p-1}\binom{2k}{k}^3\frac{H_{k}}{64^k}\equiv_p
\sum_{k=1}^{p-1}\binom{2k}{k}^3\frac{H_{2k}}{64^k}\equiv_p 0$$
which appears in \cite{Szw12}.
\end{remark}

\section{Coda}

In this final section we present a few more results with the same flavor related to ${}_2F_1(1/2)$, ${}_4F_3(-1)$ and ${}_6F_5(-1)$ . 

\noindent By Bailey's theorem \cite[p.11]{Ba35},
\begin{equation}\label{Bailey}
\pFq{2}{1}{a,1-a}{c}{\frac{1}{2}}=\frac{\Gamma(\frac{c}{2})\Gamma(\frac{c+1}{2})}{\Gamma(\frac{a+c}{2})\Gamma(\frac{ 1-a+c}{2})}
\end{equation}
it follows that
\begin{equation}\label{E22}
\sum_{k=0}^{\infty}\frac{\binom{2k}{k}^2}{32^k}=
\pFq{2}{1}{\frac{1}{2},\frac{1}{2}}{1}{\frac{1}{2}}=
\frac{ \sqrt{\pi}}{\Gamma^2(\frac{3}{4})}.
\end{equation}
Moreover, it has been proved (see for example \cite[Corollary 2.2]{Szh14} and \cite[(1.4)]{Li16})
\begin{equation}\label{E23}
\sum_{k=0}^{p-1}\frac{\binom{2k}{k}^2}{32^k}
\equiv_{p^2} \left \{ \begin{array}{ll}  (-1)^{\frac{p+1}{2}}\Gamma_p\left(\frac{1}{2}\right)\Gamma_p\left(\frac{1}{4}\right)^2 & \text{ if } p\equiv_4 1,\\
 0  & \text{ if } p\equiv_4 3. \end{array} \right. 
\end{equation}
Note that, by Clausen's Formula and its truncated version \cite[Lemma18]{CDLNS13}, equations \eqref{E20}, \eqref{E22}, and congruences \eqref{E21}, \eqref{E23} satisfy the following connecting relationships,
$$\left(\sum_{k=0}^{\infty}\frac{\binom{2k}{k}^2}{32^k}\right)^2
=\sum_{k=0}^{\infty}\frac{\binom{2k}{k}^3}{64^k}\and
\left(\sum_{k=0}^{p-1}\frac{\binom{2k}{k}^2}{32^k}\right)^2
\equiv_{p^2}\sum_{k=0}^{p-1}\frac{\binom{2k}{k}^3}{64^k}.$$
Now by letting $a=1/2$ in \eqref{Bailey},  then 
$$\sum_{k=1}^{\infty}\binom{2k}{k}^2
\frac{H_k}{32^k}=\left.\frac{\partial}{\partial c}\left(
\pFq{2}{1}{\frac{1}{2},\frac{1}{2}}{c}{\frac{1}{2}}
\right)\right|_{c=1}=\left.\frac{\partial}{\partial c}\left(
\frac{2^{1-c}\sqrt{\pi}\,\Gamma(c)}{
\Gamma^2(\frac{1}{4}+\frac{c}{2})}
\right)\right|_{c=1}
=\frac{\sqrt{\pi}\left(\pi-4\ln 2\right)}{2\Gamma^2(\frac{3}{4})}
.$$
The following result yields a $p$-adic analog of the above series.

\begin{theorem} For any prime $p>3$,
\begin{align}\label{E70}
&\sum_{k=0}^{p-1}\binom{2k}{k}^2\frac{H_k}{32^k}
\equiv_{p^2}
\begin{cases}
\Gamma_p\left(\frac{1}{2}\right)\Gamma^2_p\left(\frac{1}{4}\right) \cdot (2q_p(2)-pq^2_p(2))
&\text{if $p\equiv_4 1$},\vspace{3mm}\\
\frac{1}{2}
\Gamma_p\left(\frac{1}{2}\right)\Gamma^2_p\left(\frac{1}{4}\right) 
&\text{if $p\equiv_4 3$}.
\end{cases}
\end{align}
\end{theorem}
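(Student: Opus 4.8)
The plan is to reduce the truncated sum to a \emph{terminating} hypergeometric evaluation via Bailey's theorem, mirroring at the finite level $n=(p-1)/2$ the infinite-series computation that precedes the statement. First I would truncate: since $p\mid\binom{2k}{k}$ for $n<k<p$, we have $\binom{2k}{k}^2\equiv_{p^2}0$ on that range, so only $0\le k\le n$ survives. Writing $32^k=2^k16^k$ and applying \eqref{E01} gives
$$\sum_{k=0}^{p-1}\binom{2k}{k}^2\frac{H_k}{32^k}\equiv_{p^2}\sum_{k=0}^{n}(-1)^k\binom{n}{k}\binom{n+k}{k}\frac{H_k}{2^k}.$$
The crucial observation is that this last sum equals
$$-\left.\frac{\partial}{\partial c}\,\pFq{2}{1}{-n,n+1}{c}{\tfrac12}\right|_{c=1},$$
a terminating ${}_2F_1$ of Bailey type ($a=-n$, $1-a=n+1$). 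Since both sides of \eqref{Bailey} are rational functions of $c$ for fixed integer $n$, I may differentiate the closed form $\frac{\Gamma(c/2)\Gamma(\frac{c+1}{2})}{\Gamma(\frac{c-n}{2})\Gamma(\frac{c+n+1}{2})}$ at $c=1$.

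The evaluation splits by the parity of $n$. If $p\equiv_4 1$, then $n=2t$ is even, all four Gamma factors are regular at $c=1$, and the logarithmic derivative $\frac12\bigl[\Psi(\tfrac12)+\Psi(1)-\Psi(\tfrac12-t)-\Psi(t+1)\bigr]$ collapses, via $\Psi(\tfrac12-t)-\Psi(\tfrac12)=2O_t$ and $\Psi(t+1)-\Psi(1)=H_t$, to $-(O_t+\tfrac12 H_t)=-H_{2t}$. This yields the clean exact identity
$$\sum_{k=0}^{n}(-1)^k\binom{n}{k}\binom{n+k}{k}\frac{H_k}{2^k}=\frac{(-1)^t\binom{2t}{t}}{4^t}\,H_{2t}.$$
Here the first factor is precisely the base sum $\sum_{k=0}^{p-1}\binom{2k}{k}^2/32^k$ modulo $p^2$, whose value is given by \eqref{E23}, while $H_{2t}=H_{(p-1)/2}\equiv_{p^2}-2q_p(2)+pq_p^2(2)$ by \eqref{E13}. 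Multiplying these congruences (valid mod $p^2$ since both factors lie in $\mathbb Z_p$) produces the stated value $\Gamma_p(\tfrac12)\Gamma_p^2(\tfrac14)\,(2q_p(2)-pq_p^2(2))$.

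The delicate case is $p\equiv_4 3$, i.e. $n=2m+1$ odd, which I expect to be the main obstacle. Now $\Gamma(\frac{c-n}{2})$ has a pole at $c=1$ (its argument is $-m$), so the closed form vanishes at $c=1$, in agreement with the $p\equiv_4 3$ branch of \eqref{E23}, and only the derivative of the factor $1/\Gamma(\frac{c-n}{2})$ survives the product rule. Using $\frac{d}{dz}\Gamma(z)^{-1}\big|_{z=-m}=(-1)^m m!$ together with the duplication value of $\Gamma(m+\tfrac32)$, this limit reduces to the exact rational
$$\sum_{k=0}^{n}(-1)^k\binom{n}{k}\binom{n+k}{k}\frac{H_k}{2^k}=\frac{(-1)^{m+1}4^m}{(2m+1)\binom{2m}{m}}.$$
It then remains to match this against $\tfrac12\Gamma_p(\tfrac12)\Gamma_p^2(\tfrac14)$ modulo $p^2$. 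I would do so by writing $\binom{2m}{m}/4^m=-\Gamma_p(\tfrac12+m)\big/\bigl(\Gamma_p(\tfrac12)\Gamma_p(1+m)\bigr)$ with $\tfrac12+m=-\tfrac14+\tfrac p4$ and $1+m=\tfrac14+\tfrac p4$, expanding both $p$-adic Gamma values to first order in $p$ through \eqref{gmod} exactly as in the proof of \eqref{CD9}, and invoking the reflection values \eqref{pref}, \eqref{E05} together with the functional equation $\Gamma_p(x+1)=-x\Gamma_p(x)$ relating $\Gamma_p(-\tfrac14)$ and $\Gamma_p(\tfrac34)$; combined with $\tfrac1{2m+1}\equiv_{p^2}-2-2p$ this delivers the claimed constant. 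The genuinely nontrivial point is that, unlike the $p\equiv_4 1$ case, the final answer carries no explicit factor of $p$: the several first-order corrections (from the two Gamma expansions and from $1/(2m+1)$) must cancel, and checking this cancellation is the computational heart of the argument.
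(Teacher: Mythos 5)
Your proposal is correct and follows essentially the same route as the paper: reduce to the range $0\le k\le n=(p-1)/2$, replace $\binom{2k}{k}^2/16^k$ by $(-1)^k\binom{n}{k}\binom{n+k}{k}$ via \eqref{E01}, evaluate the resulting terminating sum $\sum_{k=0}^n\binom{n}{k}\binom{n+k}{k}H_k/(-2)^k$ in closed form according to the parity of $n$, and finish with the $p$-adic Gamma bookkeeping of \eqref{E23} and the proof of \eqref{CD9}. The one genuine difference is how the key identity is obtained: the paper merely asserts it (its Lemma on \eqref{CD5}--\eqref{CD7} suggests a Wilf--Zeilberger derivation), whereas you derive it by differentiating Bailey's theorem \eqref{Bailey} at $a=-n$, $c=1$, which runs exactly parallel to the paper's evaluation of the infinite series and makes the finite/infinite analogy transparent. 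Note also that your even-case identity, $\binom{n}{n/2}(-1)^{n/2}H_n/2^n$ times nothing squared, disagrees with the paper's displayed $\binom{n}{n/2}^2(-1)^{n/2}H_n/2^n$; a check at $n=2$ (left side $=-3/4$) confirms that your version is the correct one and the paper's exponent $2$ is a typo --- indeed only your version is consistent with the single factor $\Gamma_p(\frac12)\Gamma_p^2(\frac14)$ in \eqref{E70}. Your $p\equiv_4 3$ computation, including the cancellation of the first-order corrections coming from $1/(2m+1)\equiv_{p^2}-2-2p$ and $H_{3m+1}-H_m\equiv_p 4$, checks out and yields $\frac12\Gamma_p(\frac12)\Gamma_p^2(\frac14)$ as claimed.
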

\begin{proof}
It suffices to use the identity, 
\begin{align*}
\sum_{k=0}^n\binom{n}{k}\binom{n+k}{k}\frac{H_k}{(-2)^k}
=\begin{cases}
\displaystyle \binom{n}{\frac{n}{2}}^2\cdot\frac{(-1)^{\frac{n}{2}}H_n}{2^n}
&\text{if $n\equiv_2 0$},\vspace{3mm}\\
\displaystyle \binom{n-1}{\frac{n-1}{2}}^{-1}\cdot 
\frac{(-1)^{\frac{n+1}{2}}\,2^{n-1}}{n} 
&\text{if $n\equiv_2 1$},
\end{cases}
\end{align*}
then the verification of congruence \eqref{E70} can be carried out along the lines of the proof \eqref{C321}. The interested reader may fill in the necessary details. 
\end{proof}

By a couple of formulas which appear in \cite[(2) and (3) p.28]{Ba35},
\begin{align}
&\label{whipple43}
\pFq{4}{3}{a,1+\frac{a}{2}, d, e}{\frac{a}{2},1+a-d,1+a-e}{-1}=
\frac{\Gamma(1+a-d)\Gamma(1+a-e)}{\Gamma(1+a)\Gamma(1+a-d-e)},\\
&\label{whipple65}
\pFq{6}{5}{a,1+\frac{a}{2}, b, c, d,e}{\frac{a}{2},1+a-b,1+a-c,1+a-d,1+a-e}{-1}\\
&\qquad\qquad\qquad=
\frac{\Gamma(1+a-d)\Gamma(1+a-e)}{\Gamma(1+a)\Gamma(1+a-d-e)}
\,\pFq{3}{2}{1+a-b-c,d, e}{1+a-b,1+a-c}{1},
\end{align}
we have the series labeled (B.1) and (A.1) in \cite{VH97}
\begin{align*}
&\sum_{k=0}^{\infty}(4k+1)\frac{\binom{2k}{k}^3}{64^k}(-1)^k=
\pFq{4}{3}{\frac{1}{2},\frac{5}{4},\frac{1}{2},\frac{1}{2}}{\frac{1}{4},1,1}{-1}=\frac{2}{\pi},\\
&\sum_{k=0}^{\infty}(4k+1)\frac{\binom{2k}{k}^5}{1024^k}(-1)^k
=\pFq{6}{5}{\frac{1}{2},\frac{5}{4},\frac{1}{2},\frac{1}{2},\frac{1}{2},\frac{1}{2}}{\frac{1}{4},1,1,1,1}{-1}=\frac{2}{\Gamma^4\left(\frac{3}{4}\right)}.
\end{align*}
Moreover, it has been shown that the $p$-analogs (B.2) and (A.2) in \cite{VH97} hold for any prime $p>3$ (see  \cite{MO08} and \cite{Lo11}),
\begin{align*}
&\sum_{k=0}^{p-1}(4k+1)\frac{\binom{2k}{k}^3}{64^k}(-1)^k
\equiv_{p^3} (-1)^{\frac{p-1}{2}} p\\
&\sum_{k=0}^{p-1}(4k+1)\frac{\binom{2k}{k}^5}{1024^k}(-1)^k
\equiv_{p^3}
\begin{cases}
\displaystyle -\frac{p}{\Gamma^4_p\left(\frac{3}{4}\right)} 
&\text{if $p\equiv_4 1$},\vspace{3mm}\\
0
&\text{if $p\equiv_4 3$},
\end{cases}
\end{align*}
Recently Guillera proved  this elegant Ramanujan-type formula involving harmonic numbers \cite[(32)]{Gu13},
\begin{equation}\label{E61}
\sum_{k=0}^{\infty}\frac{\binom{2k}{k}^3}{64^k}\left(2-3(4k+1)H_k\right)(-1)^k
=\frac{12\ln 2}{\pi}
\end{equation}
The above evaluation can be established by noting that the left-hand side is equal to
\begin{align*}
2\pFq{4}{3}{\frac{1}{2},\frac{5}{4},\frac{1}{2},\frac{1}{2}}{\frac{1}{4},1,1}{-1}&+
\left.\frac{\partial}{\partial a}\left(
\pFq{4}{3}{a,1+\frac{a}{2}, \frac{1}{2}, \frac{1}{2}}{\frac{a}{2},\frac{1}{2}+a,\frac{1}{2}+a}{-1}\right)\right|_{a=\frac{1}{2}}
\!\!\!\!\!\!
-\left.\frac{\partial}{\partial e}\left(
\pFq{4}{3}{\frac{1}{2},\frac{5}{4}, \frac{1}{2}, e}{\frac{1}{4},1,\frac{3}{2}-e}{-1}\right)\right|_{e=\frac{1}{2}}.
\end{align*}
Then by using \eqref{whipple43} we obtain the right-hand side. In a similar way, from \eqref{whipple65}, we can get
\begin{equation}\label{E62}
\sum_{k=0}^{\infty}\frac{\binom{2k}{k}^5}{1024^k}\left(2-5(4k+1)H_k\right)(-1)^k
=\frac{4(15\ln(2)-2\pi)}{3\Gamma^4\left(\frac{3}{4}\right)}.
\end{equation}
The infinite series \eqref{E61} and \eqref{E62} have $p$-adic analogs which are given in the next result.

\begin{theorem} For any prime $p>3$,
\begin{align}
&\label{E63}
\sum_{k=0}^{p-1}\frac{\binom{2k}{k}^3}{64^k}\left(2-3(4k+1)H_k\right)(-1)^k
\equiv_{p^2}(-1)^{\frac{p-1}{2}}(2+6pq_p(2)),\\
&\label{E64}
\sum_{k=0}^{p-1}\frac{\binom{2k}{k}^4}{256^k}\left(2-4(4k+1)H_k\right)
\equiv_{p^2}2+12pq_p(2),\\ 
&\label{E65}
\sum_{k=0}^{p-1}\frac{\binom{2k}{k}^5}{1024^k}\left(2-5(4k+1)H_k\right)(-1)^k
\equiv_{p^2}
\left \{ \begin{array}{ll} - (2+10pq_p(2))\Gamma^4_p\left(\frac{1}{4}\right) & \text{ if } p\equiv_4 1, \vspace{3mm}\\
 0
& \text{ if } p\equiv_4 3. \end{array} \right.
\end{align}
\end{theorem}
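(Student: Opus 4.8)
The plan is to reproduce, at the level of the partial sums, the mechanism behind the infinite evaluations \eqref{E61} and \eqref{E62}. The crucial observation is that the decomposition used there is a term-by-term algebraic identity. Writing
$$t_k(a,d,e)=(-1)^k\frac{(a)_k\,(1+\tfrac a2)_k\,(d)_k\,(e)_k}{(\tfrac a2)_k\,(1+a-d)_k\,(1+a-e)_k\,k!},$$
a direct check (the derivatives bring down factors $H_k(x)=\sum_{j<k}(x+j)^{-1}$ at $x=1,\tfrac12,\tfrac14,\tfrac54$, and the $2O_k$–contributions cancel exactly as in the proof of \eqref{E61}) shows that
$$2\,t_k(\tfrac12,\tfrac12,\tfrac12)+\left.\frac{\partial t_k}{\partial a}\right|_{a=d=e=\frac12}-\left.\frac{\partial t_k}{\partial e}\right|_{a=d=e=\frac12}=\frac{\binom{2k}{k}^3}{64^k}(-1)^k\bigl(2-3(4k+1)H_k\bigr)$$
for every fixed $k$. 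Since this holds termwise, it survives truncation of the outer sum at $k=p-1$, and \eqref{E63} reduces to evaluating modulo $p^2$ the truncated base sum $\sum_{k=0}^{p-1}(4k+1)\binom{2k}{k}^3 64^{-k}(-1)^k$ together with the two truncated parameter–derivatives.

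For the base sum I would quote Van Hamme's supercongruence (B.2) recalled above, namely $(-1)^{(p-1)/2}p$ modulo $p^3$; being $O(p)$, it only contributes to the $pq_p(2)$–part. The constant $2$ and the remaining $pq_p(2)$ must therefore come from the derivative pieces. Here it is useful to note that for $(p-1)/2<k\le p-1$ one has $p^3\mid\binom{2k}{k}^3$, so those terms vanish modulo $p^3$ and the outer truncation may be taken at $(p-1)/2$; this is what links the computation to the half-integer values of $\Gamma_p$. Each derivative piece is the parameter–derivative, at the base point, of a parametric sum whose infinite version is summed by Whipple's \eqref{whipple43} (with $d=e=\tfrac12$ this reads $\sum_k t_k=\Gamma(\tfrac12+a)^2/(\Gamma(1+a)\Gamma(a))$). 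I would establish the corresponding truncated statement — a $p$-adic Whipple congruence of the shape
$$\sum_{k=0}^{(p-1)/2}t_k(a,\tfrac12,\tfrac12)\equiv_{p^2}\frac{\Gamma_p(\tfrac12+a)^2}{\Gamma_p(1+a)\,\Gamma_p(a)}\bigl(1+p\,c(a)\bigr),$$
valid as a function of the $p$-adic parameter, together with its companion in $e$. Differentiating in the parameter and using $\tfrac{d}{dx}\Gamma_p(x)=\Gamma_p(x)G_1(x)$ from \eqref{gmod} in place of the digamma function then yields the right–hand side of \eqref{E63}; the Fermat quotient $q_p(2)$ enters precisely where $\ln2$ entered in \eqref{E61}, through the $p$-adic value of $G_1(\tfrac12)-G_1(1)$, and the sign $(-1)^{(p-1)/2}$ through the reflection formula \eqref{pref}.

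The congruences \eqref{E64} and \eqref{E65} are handled by the same scheme, replacing \eqref{whipple43} by the well-poised ${}_5F_4$ summation for \eqref{E64} and by \eqref{whipple65} for \eqref{E65}, exactly paralleling the passage from \eqref{E61} to \eqref{E62}. The only new feature is the split according to $p\bmod 4$ in \eqref{E65}: the $p$-adic Whipple prefactor there carries a $\Gamma_p(\tfrac14)$, whose reduction modulo $p^2$ proceeds through the shift $\tfrac14+\tfrac p4$ and the reflection formula exactly as in the derivation of \eqref{CD9} and \eqref{CD10}; for $p\equiv_4 3$ the relevant Gamma–quotient is divisible by $p$ and the sum collapses to $0$.

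The main obstacle is the parametric $p$-adic Whipple congruence itself. A naive substitution $\Gamma\mapsto\Gamma_p$ is incorrect because of unit and sign corrections of exactly the kind already visible in \eqref{E21} and \eqref{E23}, so the correct prefactor must be pinned down independently — most cleanly by producing a WZ/creative–microscoping certificate for a terminating parametrized sum (in the spirit of Lemma 2 and of the identities taken from \cite{CC10}), differentiating that exact identity in the free parameter, and only then specializing the parameter to a $p$-adic value and reducing. One must also be careful that $p$-adic differentiation does not degrade the modulus: the parametric congruence has to be secured to one higher power of $p$ than the target, so that after differentiation the statement still holds modulo $p^2$. Once these are in place, the special–value bookkeeping for $\Gamma_p$ and $G_1$ at $\tfrac14,\tfrac12,\tfrac34,1$ is routine.
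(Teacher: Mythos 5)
Your termwise decomposition of $(-1)^k\binom{2k}{k}^3 64^{-k}\bigl(2-3(4k+1)H_k\bigr)$ as $2t_k+\partial_a t_k-\partial_e t_k$ at $a=d=e=\tfrac12$ is correct and does survive truncation, and quoting Van Hamme's (B.2) for the base sum is legitimate. The proof nevertheless has a genuine gap — and in fact a concrete error — at precisely the step you yourself call the main obstacle. The parametric truncated Whipple congruence you propose, $\sum_{k=0}^{(p-1)/2}t_k(a,\tfrac12,\tfrac12)\equiv_{p^2}\Gamma_p(\tfrac12+a)^2\,\Gamma_p(1+a)^{-1}\Gamma_p(a)^{-1}\bigl(1+p\,c(a)\bigr)$ with $c(a)\in\mathbb{Z}_p$, is false as written already at the base point: at $a=\tfrac12$ its right-hand side equals $2(-1)^{(p-1)/2}\bigl(1+p\,c(\tfrac12)\bigr)$, a $p$-adic unit, whereas by (B.2) the left-hand side is $\equiv_{p^2}(-1)^{(p-1)/2}p$, which is divisible by $p$. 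So the correct $p$-adic prefactor must vanish to order one in $p$ at $a=\tfrac12$; its parameter-derivative there is then of the same order as the main term, and pinning it down uniformly in $a$ to the accuracy that permits differentiation (one power of $p$ beyond the target, as you note) is itself a Long--Ramakrishna-type supercongruence that would have to be proved separately for each of the three families (${}_4F_3$, ${}_5F_4$, ${}_6F_5$). None of this is carried out, so the argument does not close; the WZ-certificate route you sketch for producing these lemmas is a research program, not a proof step.

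For comparison, the paper's proof is far more elementary and bypasses parametric congruences entirely. With $n=(p-1)/2$ it uses \eqref{E02} to replace $\binom{2k}{k}4^{-k}$ by $(-1)^k\binom{n}{k}\bigl(1-\tfrac p2(H_n-H_{n-k})\bigr)$ modulo $p^2$, writes $-(4k+1)=2n-4k-p$, and reduces \eqref{E63} to the Paule--Schneider identity $\sum_{k=0}^n\binom nk^3\bigl(1+3(n-2k)H_k\bigr)=(-1)^n$ from \cite[(3)]{PS03} together with the symmetry $\sum_{k=0}^n\binom nk^3(n-2k)H_kH_{n-k}=0$ and $H_n\equiv_p-2q_p(2)$ from \eqref{E13}; congruences \eqref{E64} and \eqref{E65} follow from the analogous identities for $\binom nk^4$ and $\binom nk^5$. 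If you wish to salvage your route, you must first establish the three parametric supercongruences with the correct, non-naive prefactors, which is substantially harder than the theorem itself.
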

\begin{proof} Let $n=\frac{p-1}{2}$, then, by \eqref{E02}, the left-hand side of \eqref{E63} is congruent modulo $p^2$ to
\begin{align*}
\sum_{k=0}^{n}&
\binom{n}{k}^3\left(1-\frac{3p}{2}\left(H_n-H_{n-k}\right)\right)
\left(2+3(2n-4k-p)H_k\right)\\
&\equiv_{p^2}(2-3pH_n)\sum_{k=0}^{n}
\binom{n}{k}^3
\left(1+3(n-2k)H_k\right)
+3p\sum_{k=0}^{n}
\binom{n}{k}^3H_{n-k}\\
&\qquad +9p\sum_{k=0}^{n}
\binom{n}{k}^3
(n-2k)H_kH_{n-k}
-3p\sum_{k=0}^{n}
\binom{n}{k}^3H_k\\
&\equiv_{p^2}(-1)^n(2-3p H_n)\equiv_{p^2} (-1)^{\frac{p-1}{2}}(2+6pq_p(2))
\end{align*}
where  we used \eqref{E13} and the identities 
$$\sum_{k=0}^{n}\binom{n}{k}^3 \left(1+3(n-2k)\right)H_k=(-1)^n\and
\sum_{k=0}^{n}\binom{n}{k}^3(n-2k)H_kH_{n-k}=0.$$
The first one follows from \cite[(3)]{PS03}, whereas the second one follows by replacing $k$ with $(n-k)$.

Congruences \eqref{E64} and \eqref{E65} can be obtained in a similar way by using the identities 
\begin{align*}
&\sum_{k=0}^{n}\binom{n}{k}^4 \left(1+4(n-2k)\right)H_k=(-1)^n\binom{2n}{n},\\
&\sum_{k=0}^{n}\binom{n}{k}^5 \left(1+5(n-2k)\right)H_k=
(-1)^n\sum_{k=0}^{n}\binom{n}{k}^2\binom{n+k}{k},
\end{align*}
which are equivalent to \cite[(4)]{PS03} and \cite[(5)]{PS03} respectively.
\end{proof}

\medskip

\end{document}